\documentclass[10pt,conference]{ieeeconf}
\IEEEoverridecommandlockouts                              
\usepackage{url}

\usepackage{epsfig,amssymb,latexsym,color,amsmath,pifont,colordvi,multicol,verbatim}
\usepackage{enumerate}
\usepackage{multirow}
%


\usepackage{amsmath, amssymb, bbm, xspace}







\newcommand{\Real}{\ensuremath{\mathbb{R}}}

\DeclareMathOperator*{\st}{subject\;to}

\def\spose#1{\hbox to 0pt{#1\hss}}

\def\text #1{\hbox{\quad#1\quad}}


\def\nthinsp{\mskip -2   mu}




\def\superstar{^{\raise 0.5pt\hbox{$\nthinsp *$}}}
\def\SUPERSTAR{^{\raise 0.5pt\hbox{$*$}}}

\def\lamstarT {\lambda^{\raise 0.5pt\hbox{$\nthinsp *$}T}}



\def\Fscr{{\mathcal F}}

\def\Nscr{{\cal N}}

\def\Nscr{{\cal N}}

\def\hbar{\skew{4.2}\bar h}

		\def\bkE{\mathbb{E}}
		\def\bk1{{\rm 1\kern-.17em l}}
		\def\bkD{{\rm I\kern-.17em D}}
		\def\bkR{{\rm I\kern-.17em R}}
		\def\bkP{{\rm I\kern-.17em P}}
		\def\bkY{{\bf \kern-.17em Y}}
		\def\bkZ{{\bf \kern-.17em Z}}


		\def\beq{\begin{eqnarray}}
		\def\bc{\begin{center}}
		\def\be{\begin{enumerate}}
		\def\bi{\begin{itemize}}
		\def\bs{\begin{small}}
		\def\bS{\begin{slide}}
		\def\ec{\end{center}}
		\def\ee{\end{enumerate}}
		\def\ei{\end{itemize}}
		\def\es{\end{small}}
		\def\eS{\end{slide}}
		\def\eeq{\end{eqnarray}}

	\def\cp2problem#1#2#3#4{\fbox
		 {\begin{tabular*}{0.9\textwidth}
			{@{}l@{\extracolsep{\fill}}l@{\extracolsep{6pt}}l@{\extracolsep{\fill}}c@{}}
				#1 & & $#4 $ 
			\end{tabular*}}}

		\renewcommand{\emph}[1]{\textbf{#1}}

		\def\bk1{{\rm 1\kern-.17em l}}
		\def\bkD{{\rm I\kern-.17em D}}
		\def\bkR{{\rm I\kern-.17em R}}
		\def\bkP{{\rm I\kern-.17em P}}
		
		\def\bkZ{{\bf{Z}}}

\newcommand {\beeq}[1]{\begin{equation}\label{#1}}
\newcommand {\eeeq}{\end{equation}}
\newcommand {\bea}{\begin{eqnarray}}
\newcommand {\eea}{\end{eqnarray}}

\def\texitem#1{\par\smallskip\noindent\hangindent 25pt
               \hbox to 25pt {\hss #1 ~}\ignorespaces}

\def\st{\mbox{subject to}}






\newtheorem{assumption}{Assumption}
\newtheorem{lemma}{Lemma}

\newtheorem{proposition}{Proposition}

\def\argmin{\mathop {\rm argmin}}
\def\a{{\alpha}}
\def\g{{\gamma}}
\def\l{{\lambda}}

\newcommand{\captionfonts}{\small}
\makeatletter  
\long\def\@makecaption#1#2{%
  \vskip\abovecaptionskip
  \sbox\@tempboxa{{\captionfonts #1: #2}}%
  \ifdim \wd\@tempboxa >\hsize
    {\captionfonts #1: #2\par}
  \else
    \hbox to\hsize{\hfil\box\@tempboxa\hfil}%
  \fi
  \vskip\belowcaptionskip}
\makeatother   



\newcommand{\us}[1]{\textcolor{black}{#1}}

\begin{document}

\title{\Large \bf Distributed Stochastic Optimization under Imperfect Information}
\author{Aswin Kannan \ \and \  Angelia Nedi\'{c} \ \and \ Uday V.~Shanbhag\thanks{Kannan and Shanbhag are at 
the Dept. of Indust.\ and Manuf.\ Engg.\ at the Penn.\ State Univ., University Park, PA, 16803.
	Nedi\'{c} is at the Indust.\ and Enterprise Sys.\ Engg.\ Dept.,
	Univ.\ of Illinois at Urbana-Champaign,
Urbana, IL, 61801. They are reachable at
{\{axk1014,udaybag\}}@psu.edu and angelia@illinois.edu. Kannan and
Shanbhag's research
has been partially funded by
NSF CAREER award CMMI-1246887 and CMMI-1400217 while
Nedi\'c's research is funded by NSF awards CCF 11-11342 and DMS 13-12907, and
ONR grant No.\ 00014-12-1-0998.}}

\maketitle
\begin{abstract}
We consider a stochastic convex optimization problem that requires minimizing a sum
of misspecified agent-specific expectation-valued convex functions over
the intersection of a collection of agent-specific convex sets. 
This misspecification is manifested in a  parametric sense and  may
be resolved through solving a distinct stochastic convex learning
problem. Our interest lies in the development of distributed algorithms
in which every agent makes decisions based on the knowledge of its objective and 
feasibility set while learning the decisions of other agents by
communicating with its local neighbors over a time-varying
connectivity graph. While a significant body of research currently
exists in the context of such problems, we believe that the misspecified
generalization of this problem is both important and has seen little
study, if at all. Accordingly, our focus lies on the simultaneous resolution of both problems
through a joint set of schemes that combine three distinct steps: (i) An
alignment step in which every  agent updates its current  belief by
averaging over the beliefs of its neighbors; (ii) A projected (stochastic) gradient step in which
every agent further updates this averaged estimate; and (iii) A learning
step in which agents update their belief of the misspecified parameter
by utilizing a stochastic gradient step. Under an assumption of mere
convexity on agent objectives and strong convexity of the learning
problems,  we show that the sequences generated by this collection of
update rules converge almost surely to the solution of the correctly
specified stochastic convex optimization problem and the stochastic
learning problem, respectively. 
\end{abstract}

\section{Introduction}
Distributed algorithms have grown enormously in relevance for addressing
a broad class of problems in arising in network system applications in
control and optimization, signal processing, communication networks,
power systems, amongst others
(c.f.~\cite{metric12,garcia12cdc,jianshu13}). A crucial assumption in any
 such framework is the need for precise specification of the objective
 function. In practice however, in many engineered and economic systems,
 agent-specific functions may be misspecified from a parametric
 standpoint but may have access to observations that can aid in
 resolving this misspecification. Yet almost all of the efforts in
 distributed algorithms obviate the question of misspecification in the
 agent-specific problems, motivating the present work.

In seminal work by Tsitsiklis~\cite{tsit84thesis}, 
decentralized and distributed approaches to decision-making and optimization
were investigated in settings complicated by
partial coordination, delayed communication, and the presence of noise. 
In subsequent work~\cite{tsit86}, the behavior
of general distributed gradient-based algorithms was examined. In related work on parallel
computing~\cite{Cybenko89}, iterative approaches and their convergence rate
estimates were studied for distributing computational load amongst multiple processors. 

Consensus-based extensions to optimization with linear constraints
were considered in~\cite{distributeddual87}, while convergent algorithms for
problems under settings of general agent specific convex constraints
were first proposed in~\cite{nedich10consensus}, as an extension of distributed multi-agent model 
proposed in~\cite{NO2009}, and further developed in~\cite{ramdisdata12}.  
In~\cite{deming11}
and~\cite{zhu10}, a problem with common (global)
inequality and equality constraints is considered 
and distributed primal-dual projection method is proposed. A more general case
with agents having only partial information with respect to shared and
nonsmooth constraints is studied in~\cite{tsung14}. Recent
work~\cite{zanella12} compares and obtains rate estimates for Newton and
gradient based schemes to solve distributed quadratic minimization, a
form of weighted least squares problem for networks with time varying
topology.  In~\cite{tsianos13}, a distributed dual-averaging algorithm
is proposed combining push-sum consensus and gradient
steps for constrained optimization over a static graph, 
while in~\cite{NOl2015}, a subgradient method is developed using push-sum
algorithm on time-varying graphs. Distributed algorithms that combine consensus
and gradient steps have been recently developed~\cite{subgram10,matei12}
for stochastic optimization problems.  In recent
work~\cite{leenedich13}, the authors consider a setting of asynchronous
gossip-protocol, while stochastic extensions to asynchronous
optimization were considered in~\cite{ramgossip12,nedic2011,touri10,ram10step},
convergent distributed schemes were proposed, and error bounds for
finite termination were obtained.  All aforementioned work assumes that the functions 
are either known exactly or their noisy gradients are available.

While misspecification poses a significant challenge in the resolution of
optimization problems, general purpose techniques for the resolution of misspecified
optimization problems through the joint solution of the misspecified problem
and  a suitably defined learning problem have been less studied. Our framework
extends prior work on 
deterministic~\cite{ahmadi14} and
stochastic~\cite{jiang13,jiang15solution} gradient
schemes. 
Here, we consider a networked regime in which agents are
characterized by misspecified expectation-valued  convex objectives and
convex feasibility sets. The overall goal lies in minimizing the sum of
the agent-specific objectives over the intersection of
the agent-specific constraint sets. In contrast with traditional models,
agents have access to the stochastic convex learning metric that allows for
resolving the prescribed misspecification. Furthermore, agents only have
access to their objectives and their feasibility sets and may observe the
decisions of their local neighbors as defined through a general time-varying
graph. In such a setting, we considered distributed protocols that
combine three distinct steps: (i) An {\em alignment} step in which every
agent updates its current belief by averaging over the beliefs of its
neighbors based on a set of possibly varying weights; (ii) A  projected
(stochastic) {\em gradient} step in which every agent further updates this
averaged estimate; and (iii) A {\em learning} step where agents update
their belief of the misspecified parameter by utilizing a stochastic
gradient step. We show that the produced sequences of
agent-specific decisions and agent-specific beliefs regarding the misspecified parameter converge  in an
almost sure sense to the optimal set of solutions and the optimal
parameter, respectively under the assumption of general time-varying
graphs and note that this extends the results
in~\cite{nedich10consensus}.

The  paper is organized as follows.  In Section~\ref{sec:problem}, \us{we define the
problem of interest and provide a motivation for its study}.  In
Section~\ref{sec:assump}, we outline our algorithm and the relevant assumptions. Basic properties of the
algorithm are investigated in Section~\ref{sec:algo} and the almost sure
convergence of the produced sequences is established in Section~\ref{sec:global}. We conclude the
paper with some brief remarks in Section~\ref{sec:concl}.

\section{Problem Formulation and Motivation}\label{sec:problem}
We consider a networked multi-agent setting with time-varying undirected connectivity graphs,
where the graph at time $t$ is denoted by $\mathcal{G}^t = \left\{
\mathcal{N},\mathcal{E}^t\right\}$,  $\mathcal{N} \triangleq \left\{1,\ldots,m
\right\}$ denotes the set of nodes and $\mathcal{E}^t$  is the set of
edges at time $t$. Each node represents a single agent and the problem of
interest is
\begin{align}
\label{eq:initprob}
\begin{aligned}
\hbox{minimize}   & \quad \sum_{i=1}^m
\mathbb{E}[\varphi_i(x,\theta^*,{\xi})] \\
\st 	 & \quad x \in \bigcap_{i=1}^m X_i,
\end{aligned}
\end{align}
where $\theta^*\in\Real^{p} $ represents the (misspecified) vector of
parameters, $\mathbb{E}[\varphi_i(x,\theta^*,\xi)]$ denotes
the local cost function of agent $i$, the expectation is taken with
respect to a random variable $\xi$, defined as  ${\xi}:\Omega \to
\Real^d$, and $(\Omega, {\mathcal F}, \mathbb{P})$ denotes the
associated probability space.  The function $\varphi_i: \Real^n \times
\Real^p \times \Real^d {\to \Real}$ is assumed to be convex and
continuously differentiable in $x$ for all $\theta\in{\Theta}$ and
all $\xi \in \Omega$. \\
\noindent (i) {\bf Local information}. 
Agent $i$ has access to
its objective function $\mathbb{E}[\varphi_i(x,\theta^*,\xi)] $  
and its set $X_i$ but is unaware of the objectives and
	constraint sets of the other agents. Furthermore, it may
	communicate at time $t$ with its local neighbors, as specified
		by the graph
	$\mathcal{G}^t$; \\
\noindent (ii) {\bf Objective misspecification}. 
The agent objectives are parametrized by a vector
$\theta^*$ unknown to the agents.

We assume that the true parameter $\theta^*$ is a solution to a
	distinct convex problem, accessible to every agent:
\begin{align}
 \min_{\theta \in \Theta} \, \mathbb{E}[g(\theta,\chi)],
\label{eq:thetaprob}
\end{align}
where $\Theta\subseteq\Real^p$ is a closed and convex set, 
$\chi: \Omega_{\theta}
\to \Real^r$ is a random variable with the associated probability space given by
$(\Omega_{\theta}, \Fscr_{\theta},
		\mathbb{P}_{\theta})$, while
$g: \Real^p \times \Real^r \to \Real$ is a strongly convex and
continuously differentiable function in $\theta$ for every $\chi$.
Our interest lies in the joint solution of~\eqref{eq:initprob}--\eqref{eq:thetaprob}:
\begin{align}\label{eq:prob0}
\begin{aligned}
x^* \in \argmin_x  & \left\{\sum_{i=1}^m
	\mathbb{E} [\varphi_i(x,\theta^*,\xi)] \mid x \in \bigcap_{i=1}^m
		X_i\right\},\cr
\theta^* \in \argmin_\theta & \left\{ \mathbb{E}[g(\theta,\chi)] \mid \theta
\in \Theta\right\}.
\end{aligned}
\end{align}

\noindent {\bf A sequential approach:} Traditionally, such problems are approached sequentially: (1) an
accurate approximation of $\theta^*$ is first obtained; and 
(2)~given $\theta^*$,
standard computational schemes are then applied. However, this
avenue is inadvisable when the learning problems are stochastic and
accurate solutions are available via simulation schemes, requiring significant effort. In fact, if the
learning process is terminated prematurely, the resulting solution
may differ significantly from $\theta^*$ and this error can only be
captured in an expected-value sense. Thus,
such approaches can only provide approximate solutions and, 
consequently, {\em cannot} generally provide asymptotically
exact solutions.  Inspired by recent work on learning and optimization in a centralized
regime~\cite{jiang13}, we consider the development of schemes
for distributed stochastic optimization. We build on \us{the} distributed
projection-based algorithm~\cite{nedich10consensus}, which combines
local averaging with a projected gradient step for agent-based
constraints. In particular, we introduce an additional layer of a
learning step to aid in resolving the misspecification. 

\noindent {\bf Motivating applications:} Consensus-based optimization
problems arise in a range of settings including the dispatch of
distributed energy resources (DERs)~\cite{garcia12cdc}, signal
processing~\cite{}, amongst others. Such settings are often complicated
by misspecification; for instance, there are a host of charging,
   discharging and efficiency parameters associated with storage
   resources that often require estimation.

\section{Assumptions and Algorithm}\label{sec:assump}
We begin by presenting a distributed  framework for solving
the problem in~\eqref{eq:prob0}.
To set this up more concretely, for all $i\in{\cal N}$, we let
$f_i(x,\theta)
	 \triangleq\mathbb{E}[\varphi_i(x,\theta,{\xi})]$
for all $x$ and $\theta\in\Theta$ and  $h(\theta)  \triangleq
\mathbb{E}[g(\theta,\chi)]$ {for all $\theta\in\Theta$}.
Then, problem~\eqref{eq:prob0} assumes the following form:
\begin{align}
x^* & \in \argmin_{x \in \cap_{i=1}^m X_i} \  f(x,{\theta^*}), \mbox{ where }
f(x,\theta) \triangleq \sum_{i=1}^m f_i(x,\theta^*), \notag \\
\theta^* & \in  \argmin_{\theta\in\Theta} \  h(\theta).
\label{eq:prob}
\end{align}
We consider a distributed algorithm where agent $i$ knows $f_i$ and the set $X_i$, while all agents have access to $h$. We further assume
	that $i$th agent has access to oracles that produce  random
		samples 
		$\nabla_{x} \varphi_i(x,\theta,\xi)$ and $\nabla_{\theta} g(\theta,\chi)$.  
The information {needed by agents} to solve the optimization problem
is acquired through local sharing of the estimates over a time-varying communication network.
Specifically, 
at iteration $k$, {the $i$th agent has estimates $x_i^k\in X_i$ and
	$\theta_i^k\in\Theta$ and at the next iteration,} 
 constructs a vector $v_i^k$, as an average of the vectors $x_j^k$
 obtained from its {local} neighbors, given by:
\begin{align}\label{aver}
v_{i}^{k} := \sum_{j=1}^{m}a_{i}^{j,k}x_{j}^{k}\quad
\hbox{for all $i = 1, \hdots, m$ and $k\ge0$},\end{align}
where {the weights} $a_{i}^{j,k}$ are nonnegative scalars
{satisfying}  $\sum_{j = 1}^{m}
a_{i}^{j,k} = 1$ and are related to the underlying
connectivity graph $\mathcal{G}^k$ over which the agents communicate at time $k$.
Then, {for $i = 1, \hdots, N$, the $i$th} agent updates its $x$- and $\theta$-variable as follows:
\begin{align}
x_{i}^{k+1} & := \Pi_{{X_i}} \left( v_{i}^{k} - {\alpha_k} \left( \nabla_x
			f_{i} (v_{i}^{k},\theta_i^{k}) + w_i^{k} \right)\right),
	\label{proj-x}  \\
\theta_i^{k+1} & := \Pi_{\Theta} \left( \theta_{i}^{k} -
		{\gamma_{k}} \left( \nabla h (\theta_{i}^{k}) + \beta_i^{k} \right)\right),
\label{eq:projec}
\end{align}
where
$w_i^k \triangleq  \nabla_x \varphi_i(v_i^k,\theta_i^k,\xi_i^k)-\nabla_x
f_i(v_i^k,\theta_i^k)$ with $\nabla_x f_i(x,\theta) = \mathbb{E}[\nabla_x
\varphi_i(x,\theta,\xi)]$, and 
$\beta_i^k \triangleq \nabla_\theta g
(\theta_{i}^{k},\chi_i^k) - \nabla h(\theta_{i}^{k})$ with
$\nabla h(\theta_i^k) = \mathbb{E}[\nabla_\theta g(\theta_i^k,\chi)]$ for all $i\in{\cal N}$ and all $k\ge0$.
The parameters $\a_k>0$ and $\g_k>0$ {represent} stepsizes at epoch $k$, while the initial points 
$x_i^0\in X_i$ and $\theta_i^0\in\Theta$ are randomly selected for each agent $i$.
The $i$th agent has access  to $\left( \nabla_x
			f_{i} (v_{i}^{k},\theta_i^{k}) + w_i^{k} \right)$
and not $\left( \nabla_x f_{i} (v_{i}^{k},\theta_i^{k}) \right)$. The same is the case with the learning 
function.
At time epoch $k$, agent $i$ proceeds to average
over its neighbors' decisions by using the weights in \eqref{aver} and
employs this average to update its decision in
\eqref{proj-x}.  Furthermore, agent $i$ makes a subsequent update in its
belief regarding $\theta^*$, by taking a similar (stochastic) gradient
update, given by \eqref{eq:projec}.  

The weight $a_{i}^{j,k}$ used by agent $i$ for the iterate of agent $j$ at time $k$
is based on the connectivity graph $\mathcal{G}^k$.
Specifically, letting
$\mathcal{E}^k_{i}$ be the set of neighbors of agent $i$:
\[\mathcal{E}^k_{i}=\{j\in\mathcal{N}\mid \{j,i\}\in \mathcal{E}^k\}\cup\{i\},\]
the weights $a_{i}^{j,k}$ are compliant with the neighbor structure:
\[ a_{i}^{j,k} > 0 \hbox{ if  $j \in \mathcal{E}^k_{i}$}\quad\hbox{and}\quad
a_{i}^{j,k}= 0 \hbox{ if $j \not \in \mathcal{E}^k_{i}$}. \]
We assume that each graph $\mathcal{G}^k$ is connected and
that matrices are doubly stochastic, as given in the following assumption.
\begin{assumption}[Graph and weight matrices]\label{assump:doubstoch}
\noindent  \\
\noindent (a) The matrix $A(k)$ (whose $(ij)$th entry is denoted by $a_{i}^{j,k}$)  is doubly stochastic for every $k$, i.e.,
$\sum_{i=1}^{m}a_{i}^{j,k} = 1$ for every $j$ and
$\sum_{j=1}^{m}a_{i}^{j,k} = 1$ for every $i$.\\
\noindent 
(b) The matrices $A(k)$ have positive diagonal entries, and all positive entries in every $A(k)$ are uniformly bounded away from zero, i.e., there exists $\eta>0$ such that, for all $i,j,$ and $k$, we have
$a_{i}^{j,k}\ge\eta$ whenever $a_{i}^{j,k}>0$.\\
\noindent 
(c) 
The graph $\mathcal{G}^k$ is connected for every $k \geq 0$. 
\end{assumption}
The instantaneous connectivity assumption on the graphs $\mathcal{G}^k$ can be relaxed by requiring that
the union of these graphs is connected every $T$ units of time, for
instance. The analysis of this case  
is similar to that  given in this paper. We choose to work with connected graphs in order
to keep the analysis somewhat simpler and to provide a sharper focus on the learning aspect of the problem.

Next, we define ${\mathcal F}_0 \triangleq (x_i^0,\theta_i^0),\ i\in\mathcal{N}\}$ and
${\mathcal F}_k =\{(\xi_i^t,
   \chi_i^t), \ i\in\mathcal{N},\ t=0,1,\ldots,k-1\}$ for all $k\ge 1$
and make the following
   assumptions on the {conditional} first and second moments of
   the stochastic errors $w_i^k$ and $\beta_i^k$. These assumptions
	   are relatively standard in the development of stochastic gradient
		   schemes. 
\begin{assumption}
[Conditional first and second moments\\]
\label{assump:errormoment}
(a) $\bkE[w_{i}^{k}\mid \mathcal{F}_{k}]=0$ and
$\bkE[\beta_{i}^{k}\mid \mathcal{F}_{k}]=0$  for all $k$ and $i \in {\cal N}$.\\
(b)  $\bkE[\| w_i^{k} \|^2\mid \mathcal{F}_{k}]\le \nu^2$ and  
$\bkE[\| \beta_i^{k} \|^2 \mid \mathcal{F}_{k}]\le \nu_{\theta}^2$ for all $k$ and $i \in {\cal N}$.\\
$($c) $\mathbb{E}[\|\theta_i^0\|^2]$ is finite for all $i\in {\cal N}$.
\end{assumption}
We now discuss the assumptions {on agent objectives}, 
the learning metric and the underlying set constraints.

\begin{assumption}[{Feasibility sets}\\]\label{assump:bnd}
(a) For every $i \in \Nscr$, the set $X_i\subset\Real^n$ is convex and compact.\\
(b) The intersection set $\cap_{i=1}^m X_i$ is nonempty.\\ 
(c) The set $\Theta\subseteq\Real^p$ is convex and closed.
\end{assumption}
Note that under the compactness assumption on the sets $X_i$, we have  
$\mathbb{E}[\|x_i^0\|^2]<\infty$ for all $i\in {\cal N}$. Furthermore we have
\begin{align}
\label{assump-bnd}
\max_{x_i,y_i \in X_i} \|x_i -y_i\| \leq D
\end{align}
for some scalar $D>0$ and for all $i$.
Next, we consider the conditions for the agent objective functions.

\begin{assumption}
[Agent objectives\\]
\label{assump:functionsf}
 For every $i \in \mathcal{N}$, the function $f_i(x,\theta)$ is convex in $x$ for every
	$\theta\in\Theta$. Furthermore, for every $i \in \mathcal{N}$, the gradients $\nabla_{x}
	f_i(x,\theta)$ are uniformly Lipschitz continuous functions in $\theta$ for all
		$x\in X_i$: $\|\nabla_{x}f_i(x,\theta^a)-\nabla_{x} f_i(x,\theta^b)\|
		\leq L_{\theta} \|\theta^a-\theta^b \|$ for all $\theta^a,\theta^b\in\Theta$, all $x\in X_i$, and all $i \in
		\Nscr$.
\end{assumption}

\begin{assumption}
[Learning metric\\]
\label{assump:functionsh}
The function $h$ is strongly convex over $\Theta$ with a constant $\kappa>0$, and 
its gradients are Lipschitz continuous with a constant $R_{\theta}$, i.e.,
$\| \nabla h(\theta^{a})-\nabla h(\theta^{b}) \| \leq R_{\theta} \|\theta^a-\theta^b \|$ 
for all $\theta^a, \theta^b \in \Theta$.
\end{assumption}

By the strong convexity of $h$, 
the problem \eqref{eq:thetaprob} has a unique solution denoted by $\theta^*$.  
From the convexity of the functions $f_i$ in $x$ (over $\Real^n$) for every
	$\theta\in\Theta$, as given in Assumption~\ref{assump:functionsf}, these functions are continuous. Thus, {when $\cap_{i=1}^m X_i$ is nonempty and each $X_i$ is compact (Assumption~\ref{assump:bnd}),
	the problem $\min_{x\in\cap_{i=1}^m X_i} \sum_{i=1}^m
	f_i(x,\theta^*)$ has a solution.}
	
\section{Basic properties of the algorithm}\label{sec:algo}
In this section, we provide some basic relations for the
algorithm~\eqref{aver}--\eqref{eq:projec} that are fundamental to
establishing the almost sure convergence of the sequences produced by
the algorithm. The proofs of all the results can be found in~\cite{KNU2015}.

\subsection{Iterate Relations}
We start with a simple result for weighted averages of a finitely many points.
\begin{lemma}\label{lma:aver}
Let $y_1,\ldots,y_m\in\Real^n$ and $\l_1,\ldots,\l_m\in\Real$, with $\l_i\ge0$ for all $i$ and $\sum_{i=1}^m\l_i=1$.
Then, for any $c\in\Real^n$, we have
\[\left\|\sum_{i=1}^m\l_i y_i -c \right\|^2 
=\sum_{i=1}^m\l_i\|y_i-c\|^2 -\frac{1}{2}\sum_{j=1}^m\sum_{\ell=1}^{m}\l_j\l_\ell\|y_j - y_\ell\|^2.\]
\end{lemma}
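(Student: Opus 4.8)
This is a standard identity about weighted averages (a "weighted Stewart / Lagrange" type identity). The plan is to expand both sides and match terms, which reduces the claim to a purely algebraic verification.

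First I would expand the left-hand side. Writing $\bar y \triangleq \sum_{i=1}^m \l_i y_i$, I have
\[
\left\| \bar y - c\right\|^2 = \|\bar y\|^2 - 2\langle \bar y, c\rangle + \|c\|^2
= \left\| \sum_i \l_i y_i \right\|^2 - 2\sum_i \l_i \langle y_i, c\rangle + \|c\|^2,
\]
where I used $\sum_i \l_i = 1$ to write $\|c\|^2 = \sum_i \l_i \|c\|^2$ and similarly for the cross term. Next I would expand the first sum on the right-hand side term by term, $\|y_i - c\|^2 = \|y_i\|^2 - 2\langle y_i, c\rangle + \|c\|^2$, so that
\[
\sum_i \l_i \|y_i - c\|^2 = \sum_i \l_i \|y_i\|^2 - 2\sum_i \l_i \langle y_i, c\rangle + \|c\|^2.
\]
Comparing, the terms $-2\sum_i \l_i\langle y_i,c\rangle + \|c\|^2$ cancel on both sides, so the identity reduces to showing
\[
\left\| \sum_i \l_i y_i \right\|^2 = \sum_i \l_i \|y_i\|^2 - \frac12 \sum_{j=1}^m \sum_{\ell=1}^m \l_j \l_\ell \|y_j - y_\ell\|^2 .
\]

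For this last step I would expand the double sum: $\|y_j - y_\ell\|^2 = \|y_j\|^2 + \|y_\ell\|^2 - 2\langle y_j, y_\ell\rangle$. Summing against $\l_j\l_\ell$ and using $\sum_j \l_j = \sum_\ell \l_\ell = 1$ gives
\[
\sum_{j,\ell}\l_j\l_\ell\|y_j-y_\ell\|^2 = \sum_j \l_j\|y_j\|^2 + \sum_\ell \l_\ell\|y_\ell\|^2 - 2\left\langle \sum_j\l_j y_j,\ \sum_\ell \l_\ell y_\ell\right\rangle = 2\sum_i \l_i\|y_i\|^2 - 2\left\|\sum_i \l_i y_i\right\|^2.
\]
Multiplying by $-\tfrac12$ and adding $\sum_i\l_i\|y_i\|^2$ recovers exactly $\left\|\sum_i\l_i y_i\right\|^2$, which closes the argument.

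There is essentially no genuine obstacle here: the only thing to be careful about is the bookkeeping of the $\sum_i\l_i = 1$ normalizations (they are what let one move $\|c\|^2$ and the linear-in-$c$ terms freely between weighted and unweighted forms), and the factor $\tfrac12$ matching the symmetric double sum that double-counts each unordered pair. I would present the three expansions above in order and conclude.
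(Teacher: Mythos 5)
Your proof is correct and follows essentially the same route as the paper's: a direct algebraic expansion of the squared norms using $\sum_i \l_i = 1$ and the identity $\|a-b\|^2 = \|a\|^2 + \|b\|^2 - 2a^Tb$. The only cosmetic difference is that the paper applies this polarization identity to the shifted vectors $y_i - c$ inside the double sum, whereas you cancel the $c$-dependent terms first and then verify the resulting $c$-free identity; both amount to the same bookkeeping.
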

\begin{proof}
By using the fact that $\l_i$ are convex weights, we write
\begin{align*}
\left\|\sum_{i=1}^m\l_i y_i -c \right\|^2
&=\left\|\sum_{i=1}^m\l_i (y_i -c) \right\|^2 \\
&=\sum_{i=1}^m \sum_{j=1}^m\l_i\l_j (y_i -c)^T(y_j -c).\end{align*}
Noting that $2a^Tb=\|a\|^2 +\|b\|^2 - \|a-b\|^2$, valid for any $a,b\in\Real^n$, and applying it to each inner product,
we obtain
\begin{align*}
& \quad \left\|\sum_{i=1}^m\l_i y_i -c \right\|^2 \\
& =\frac{1}{2}\sum_{i=1}^m \sum_{j=1}^m\l_i\l_j 
\left( \|y_i -c\|^2 + \|y_j -c\|^2 -\|y_i-y_j\|^2\right)\cr
&=\sum_{i=1}^m \l_i\|y_i -c\|^2 -\frac{1}{2}\sum_{i=1}^m \sum_{j=1}^m\l_i\l_j \|y_i -y_j\|^2,
\end{align*}
where the second equality follows by noting that 
$\frac{1}{2}\sum_{i=1}^m \sum_{j=1}^m\l_i\l_j 
\left( \|y_i -c\|^2 + \|y_j -c\|^2\right)=\sum_{p=1}^m \l_p \|y_p -c\|^2$, which can be seen 
by using $\sum_{j=1}^m\l_j=1$.
\end{proof}
We use the following lemma that provides a bound on the difference between consecutive $x$-iterates of the algorithm and an analogous relation 
for consecutive $\theta$-iterates. 

\begin{lemma}\label{lemm:xdesc}
Let Assumptions~\ref{assump:doubstoch}--\ref{assump:functionsf} hold.
Also, let $X=\cap_{i=1}^m X_i$ and let $h$ be strongly convex over $\Theta$.
Let the iterates $x^k_i$ be generated
according to~\eqref{aver}--\eqref{eq:projec}.  Then, almost surely, we have for all $x\in X$ and $k\ge 0$,
\begin{align*}
& \sum_{i = 1}^{m}  \bkE  \left[ \|x_{i}^{k+1} -x \|^2 \mid \mathcal{F}_k\right]  
\leq \sum_{j = 1}^{m} \|x_{j}^k -x\|^2  \cr
& - \eta^2 \sum_{\{s,\ell\}\in \mathcal{T}^k}  \|x_s^k-x_\ell^k\|^2
+m\alpha_k^2 (2S^2 + \nu^2)
 + m\alpha_k^{2-\tau}L_{\theta}^2 D^2\\
 & + \left(\alpha_k^{\tau} + 2\alpha^2_kL_{\theta}^2\right) 
\sum_{i =1}^{m}\|\theta_i^k-\theta^*\|^2\\
& -2\alpha_k \sum_{i = 1}^{m}\left(f_i(v_i^k,\theta^*)-f_i(x,\theta^*)\right),
\end{align*}
where $\mathcal{T}^k$ is a spanning tree in the graph $\mathcal{G}^k,$
$\tau \in (0,2)$ is an arbitrary but fixed scalar,  
$\theta^*=\argmin_{\theta\in\Theta} h(\theta)$, and 
$S=\max_i\max_{x\in \bar X}\| \nabla_x f_i(x,\theta^*) \|$, 
with $\bar X$ being the convex hull of the union $\cup_{i=1}^m X_i$.
\end{lemma}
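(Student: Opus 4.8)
The plan is to start from the nonexpansivity of the Euclidean projection $\Pi_{X_i}$ onto the convex set $X_i$, applied to the update~\eqref{proj-x}. For a fixed $x\in X=\cap_i X_i\subseteq X_i$, write $\|x_i^{k+1}-x\|^2 = \|\Pi_{X_i}(v_i^k-\alpha_k(\nabla_x f_i(v_i^k,\theta_i^k)+w_i^k))-x\|^2 \le \|v_i^k-\alpha_k(\nabla_x f_i(v_i^k,\theta_i^k)+w_i^k)-x\|^2$, and expand the square. Summing over $i$ gives three groups of terms: the ``averaging'' part $\sum_i\|v_i^k-x\|^2$, a cross term $-2\alpha_k\sum_i(\nabla_x f_i(v_i^k,\theta_i^k)+w_i^k)^T(v_i^k-x)$, and the step-size-squared part $\alpha_k^2\sum_i\|\nabla_x f_i(v_i^k,\theta_i^k)+w_i^k\|^2$. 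Next I would take the conditional expectation $\mathbb{E}[\cdot\mid\mathcal{F}_k]$ using Assumption~\ref{assump:errormoment}: the term linear in $w_i^k$ vanishes, and $\mathbb{E}[\|\nabla_x f_i+w_i^k\|^2\mid\mathcal{F}_k]\le \|\nabla_x f_i(v_i^k,\theta_i^k)\|^2+\nu^2$ after cross-term cancellation, which is in turn bounded via the Lipschitz-in-$\theta$ property (Assumption~\ref{assump:functionsf}) and $S$ by something like $2S^2+2L_\theta^2\|\theta_i^k-\theta^*\|^2$, yielding the $m\alpha_k^2(2S^2+\nu^2)$ term and the $2\alpha_k^2 L_\theta^2\sum_i\|\theta_i^k-\theta^*\|^2$ contribution.

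The averaging term is handled with Lemma~\ref{lma:aver}: since $v_i^k=\sum_j a_i^{j,k}x_j^k$ with convex weights and $A(k)$ doubly stochastic, $\sum_i\|v_i^k-x\|^2 = \sum_i\sum_j a_i^{j,k}\|x_j^k-x\|^2 - \tfrac12\sum_i\sum_{j,\ell}a_i^{j,k}a_i^{\ell,k}\|x_j^k-x_\ell^k\|^2 = \sum_j\|x_j^k-x\|^2 - \tfrac12\sum_i\sum_{j,\ell}a_i^{j,k}a_i^{\ell,k}\|x_j^k-x_\ell^k\|^2$, where the first simplification uses column-stochasticity $\sum_i a_i^{j,k}=1$. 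The double sum is then lower-bounded by restricting to edges of a spanning tree $\mathcal{T}^k$ of the connected graph $\mathcal{G}^k$ (Assumption~\ref{assump:doubstoch}(c)) and using the uniform lower bound $\eta$ on positive entries (Assumption~\ref{assump:doubstoch}(b)): each edge $\{s,\ell\}\in\mathcal{T}^k$ contributes through the diagonal-adjacent products $a_i^{s,k}a_i^{\ell,k}\ge\eta^2$ for an appropriate $i$, producing $-\eta^2\sum_{\{s,\ell\}\in\mathcal{T}^k}\|x_s^k-x_\ell^k\|^2$.

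For the cross term I would split $\nabla_x f_i(v_i^k,\theta_i^k)=\nabla_x f_i(v_i^k,\theta^*)+(\nabla_x f_i(v_i^k,\theta_i^k)-\nabla_x f_i(v_i^k,\theta^*))$. The first piece combined with convexity of $f_i(\cdot,\theta^*)$ gives $-2\alpha_k\sum_i\nabla_x f_i(v_i^k,\theta^*)^T(v_i^k-x)\le -2\alpha_k\sum_i(f_i(v_i^k,\theta^*)-f_i(x,\theta^*))$, the desired last term. The second (error-in-$\theta$) piece is bounded by Cauchy--Schwarz and Assumption~\ref{assump:functionsf}: $-2\alpha_k(\nabla_x f_i(v_i^k,\theta_i^k)-\nabla_x f_i(v_i^k,\theta^*))^T(v_i^k-x)\le 2\alpha_k L_\theta\|\theta_i^k-\theta^*\|\cdot\|v_i^k-x\|$, and here I would apply Young's inequality $2ab\le \alpha_k^{1-\tau}a^2 + \alpha_k^{\tau-1}b^2$ with $a=\|\theta_i^k-\theta^*\|$, $b=L_\theta\|v_i^k-x\|$ (using $\|v_i^k-x\|\le D$ by~\eqref{assump-bnd} and convexity of the bound over the hull), which produces $\alpha_k^\tau\sum_i\|\theta_i^k-\theta^*\|^2 + m\alpha_k^{2-\tau}L_\theta^2 D^2$. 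Collecting all pieces gives exactly the stated inequality. The main obstacle I anticipate is the bookkeeping in the averaging/spanning-tree step — carefully extracting $\eta^2\sum_{\{s,\ell\}\in\mathcal{T}^k}\|x_s^k-x_\ell^k\|^2$ from the full double sum $\tfrac12\sum_i\sum_{j,\ell}a_i^{j,k}a_i^{\ell,k}\|x_j^k-x_\ell^k\|^2$ without double-counting or losing the diagonal-positivity structure that makes each tree edge ``visible'' through some node $i$'s weight row; the rest is a routine assembly of standard stochastic-subgradient estimates with the one nonstandard twist being the $\tau$-dependent Young split, which is what decouples the $\theta$-error from $\alpha_k^2$ scaling.
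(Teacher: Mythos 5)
Your proposal is correct and follows essentially the same route as the paper's proof: projection nonexpansivity, expansion into the quadratic term $T_A^k$ and cross term $T_B^k$, conditional expectation with Assumption~\ref{assump:errormoment}, the add-and-subtract of $\nabla_x f_i(v_i^k,\theta^*)$ with $(a+b)^2\le 2a^2+2b^2$, Lemma~\ref{lma:aver} plus double stochasticity and the spanning-tree lower bound for the averaging term, and the $\tau$-weighted Young split for the $\theta$-error cross term. The only slip is that your Young's inequality as written ($2ab\le\alpha_k^{1-\tau}a^2+\alpha_k^{\tau-1}b^2$ with $a=\|\theta_i^k-\theta^*\|$, $b=L_\theta\|v_i^k-x\|$) attaches the weights to the wrong factors and would yield $\alpha_k^{2-\tau}\|\theta_i^k-\theta^*\|^2+\alpha_k^{\tau}L_\theta^2 D^2$; swapping the two weights recovers the stated terms.
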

\begin{proof}
First, we note that by the strong convexity of $h$, 
the point $\theta^*\in\Theta$ minimizing $h$ over $\Theta$ exists and it is unique.
Next, we use the projection property for a closed convex set $Y$, according to which
we have,
 $ \|\Pi_Y[x] - y\|^2 \leq \|x-y\|^2$ for all $y \in Y$ and all $x$.
 Therefore, for all $i$, and any $x\in X$,  we have
\begin{align}\label{eq:initdesc} 
    & \quad \|  x_{i}^{k+1} -x \|^2 \nonumber \\
     & = \left\|\Pi_{X_i}\left(v_{i}^{k} -
		  \alpha_k \left( \nabla_x f_{i} (v_{i}^{k},\theta_i^k) + w_i^{k}
			  \right)\right) - x  \right\|^2  \nonumber \\
 & \leq \left\|v_{i}^{k} - \alpha_k \left( \nabla_x f_{i} (v_{i}^{k},\theta_i^k) + w_i^{k} \right) - x \right\|^2\cr
 & \leq \|v_{i}^{k} - x \|^2 + T_A^k  + T_B^k, \\ 
 \mbox{ where }  T_A^k  & \triangleq {\alpha_k^2} \|\nabla_x f_i(v_i^k,\theta_i^k)  + w_i^k \|^2,
	\label{defTA} \\
		T_B^k  & \triangleq  -2\alpha_k(v_i^k-x)^T(\nabla_x f_i(v_i^k,\theta_i^k) +
				w_i^k).\label{defTB}
\end{align}
Expanding $T_A^k$, we obtain that
\begin{align*}
 T_A^k  & 
 = \alpha_k^2 \|\nabla_x f_i(v_i^k,\theta_i^k)  + w_i^k \|^2 
 = \alpha_k^2 \|\nabla_x f_i(v_i^k,\theta_i^k)  + w_i^k \|^2 \\
& = \alpha_k^2 \| \nabla_x f_i(v_i^k,\theta_i^k) \|^2 + \alpha_k^2  \|
w_i^k \|^2 \\ & + 2 \alpha_k^2  (w_i^k)^{T} \nabla_x f_i(v_i^k,\theta_i^k ).
\end{align*}
Taking the conditional expectations on both sides of \eqref{exp-Ta} with respect to the past
and using Assumption~\ref{assump:errormoment} on the stochastic gradients, we have that almost surely,
\begin{align}\label{exp-Ta}
 &\quad \bkE \left[ T_A^k  \mid \mathcal{F}_{k} \right] \nonumber \\
&  = \alpha_k^2 \| \nabla_x f_i(v_i^k,\theta^*) +
	 \nabla_x f_i(v_i^k,\theta_i^k) -  \nabla_x f_i(v_i^k,\theta^*) \|^2
 \nonumber \\ 
& + \alpha_k^2 \underbrace{\bkE \left[ \| w_i^k \|^2 \mid
	\mathcal{F}_{k} \right]}_{\ \leq \ \nu^2}
 + 2\alpha_k^2 \underbrace{\bkE \left[ (w_i^k)^{T} \nabla_x
	 f_i(v_i^k,\theta_i^k ) \mid \mathcal{F}_{k} \right]}_{ \ = \ 0} \cr 
	 &\le \alpha_k^2  ( \| \nabla_x f_i(v_i^k,\theta^*) \|
	 +\|\nabla_x f_i(v_i^k,\theta_i^k) -  \nabla_x f_i(v_i^k,\theta^*) \|)^2 \cr
	 & +\alpha_k^2 \nu^2 \nonumber \\
	 & \leq \alpha_k^2 (2S^2 + 2L_{\theta}^2 \|\theta_i^k-\theta^* \|^2 + \nu^2),
\end{align}
where in the last inequality we use $(a+b)^2\le 2a^2 + 2b^2$ valid for all $a,b\in\Real$, and the Lipschitz property 
of $\nabla_x f_i(x,\theta)$ (cf.\ Assumption~\ref{assump:functionsf}). Furthermore, since the sets $X_i$ 
are compact by Assumption~\ref{assump:bnd}, the convex hull $\bar X$ of $\cup_{i=1}^m X_i$ is also compact,
implying by continuity of the gradients that
$\max_i\max_{k\ge0}\| \nabla_x f_i(v_i^k,\theta^*) \| \le \max_i\max_{x\in \bar X}\| \nabla_x f_i(x,\theta^*) \|=S,$
with $S<\infty$.

Next, we consider the term $T_B^k$. By taking the conditional expectation with respect to $\mathcal{F}_{k}$ 
and using  $\bkE\left[(v_i^k-x)^T  w_i^k \mid \mathcal{F}_{k} \right]=0$, we obtain
\begin{align*}
& \bkE\left[ T_B^k \mid \mathcal{F}_{k} \right] \\
& =  -2\alpha_k(v_i^k-x)^T(\nabla_x f_i(v_i^k,\theta_i^k)-\nabla_x
		f_i(v_i^k,\theta^*)) \\
& -  2\alpha_k(v_i^k-x)^T \nabla_x f_i(v_i^k,\theta^*) \cr
& \leq 2\alpha_k \| v_i^k-x\| \| \nabla_x f_i(v_i^k,\theta_i^k)-\nabla_x f_i(v_i^k,\theta^*) \| \\
& -2\alpha_k(v_i^k-x)^T \nabla_x f_i(v_i^k,\theta^*).
\end{align*}
By using the Lipschitz property 
of $\nabla_x f_i(x,\theta)$ (cf.\ Assumption~\ref{assump:functionsf}), 
the relation  $2\alpha ab=2(\sqrt{\alpha^{2-\tau}} a) (\sqrt{\alpha^{\tau}} b)$ valid for any $a,b\in\Real$ and any $\tau>0$,
and the Cauchy-Schwarz inequality, we further obtain
\begin{align}\label{exp-Tb}
& \bkE\left[ T_B^k \mid \mathcal{F}_{k} \right] \nonumber \\
& \leq 2\alpha_k L_{\theta} \| v_i^k-x\| \| \theta_i^k - \theta^* \|
-2\alpha_k(v_i^k-x)^T \nabla_x f_i(v_i^k,\theta^*)  \cr
& \leq \alpha_k^{2-\tau}L_{\theta}^2 \|v_i^k -x\|^2 
+ \alpha_k^\tau
\|\theta_i^k-\theta^*\|^2 \nonumber \\
& -2\alpha_k(v_i^k-x)^T \nabla_x f_i(v_i^k,\theta^*)  \nonumber \\
& \leq \alpha_k^{2-\tau}L_{\theta}^2 D^2  + \alpha_k^\tau \|\theta_i^k-\theta^*\|^2 \nonumber \\
& -2\alpha_k\left(f_i(v_i^k,\theta^*)- f_i(x,\theta^*)\right),
\end{align}
where in the last inequality we also employ the convexity of $f_i$ and boundedness of sets $X_i$, together 
with the fact that $v_i^k,x\in X_i$ for all $i$ (cf.~Assumption~\ref{assump:bnd}).

Now, we take the conditional expectation in relation~\eqref{eq:initdesc} and we substitute estimates~\eqref{exp-Ta} 
and~\eqref{exp-Tb}, which yields almost surely, for all $i$, all $x\in X$ and all $k$, 
\begin{align*}
& \bkE  \left[ \|x_{i}^{k+1} -x \|^2 \mid \mathcal{F}_k\right]  \\
& \leq \|v_{i}^k -x \|^2 + \alpha_k^2 (2S^2 + \nu^2)
 + \alpha_k^{2-\tau}L_{\theta}^2  D^2\cr
& + \left(\alpha_k^{\tau} + 2\alpha^2_kL_{\theta}^2\right) 
\|\theta_i^k-\theta^*\|^2 \\
&-2\alpha_k \left(f_i(v_i^k,\theta^*)-f_i(x,\theta^*)\right).
\end{align*}
Summing the preceding relations
over $i = 1, \hdots, m$, we have the following inequality almost surely, for all $x\in X$ and all $k\ge0$,
\begin{align}\label{eq:mid0}
& \quad \sum_{i = 1}^{m}  \bkE  \left[ \|x_{i}^{k+1} -x \|^2 \mid \mathcal{F}_k\right]  \cr 
& \leq \sum_{i = 1}^{m} \|v_{i}^k -x\|^2 + m\alpha_k^2 (2S^2 + \nu^2)
 + m\alpha_k^{2-\tau}L_{\theta}^2 D^2\cr
& + \left(\alpha_k^{\tau} + 2\alpha^2_kL_{\theta}^2\right) \sum_{i =
		1}^{m}\|\theta_i^k-\theta^*\|^2 \nonumber \\
& -2\alpha_k \sum_{i = 1}^{m}\left(f_i(v_i^k,\theta^*)-f_i(x,\theta^*)\right).
\end{align}

We now focus on the term $\|v_{i}^k -x\|^2$. Noting that ${\sum_{j=1}^m a_i^{j,k} = 1}$, 
by Lemma~\ref{lma:aver}, it follows that for all $x\in X$ and all $k\ge0$,
\begin{align*}
& \quad \|v_{i}^k -x\|^2
=\left\|\sum_{j=1}^{m} a_{i}^{j,k}x_{j}^{k}-x \right\|^2 \\
& =\sum_{j=1}^m a_{i}^{j,k} \|x_{j}^{k}-x\|^2  
- \frac{1}{2}\sum_{j=1}^m\sum_{\ell=1}^m a_{i}^{j,k}a_{i}^{\ell,k}\|x_j^k-x_\ell^k\|^2.\end{align*}
By summing these relations over $i$,  exchanging the order of summations, and using 
$\sum_{i=1}^m a_i^{j,k}=1$ for all $j$ and $k$ (cf.~Assumption~\ref{assump:doubstoch}(a)),  we obtain
for all $x\in X$ and all $k\ge0$,
\begin{align*}& \sum_{i=1}^m\|v_{i}^k -x\|^2 \\  
&=\sum_{j=1}^{m} \|x_{j}^{k}-x \|^2 
- \frac{1}{2}\sum_{j=1}^m\sum_{\ell=1}^m\sum_{i=1}^m a_{i}^{j,k}a_{i}^{\ell,k}\|x_j^k-x_\ell^k\|^2.
\end{align*}
By using the connectivity assumption on the graph $\mathcal{G}^k$ and the assumption on the entries in 
the matrix $A(k)$ (cf.~Assumptions~\ref{assump:doubstoch}(b) and $($c)), we can see that there exists a spanning tree
$\mathcal{T}^k\subseteq \mathcal{G}^k$ such that 
\[\frac{1}{2}\sum_{j=1}^m\sum_{\ell=1}^m\sum_{i=1}^m a_{i}^{j,k}a_{i}^{\ell,k}\|x_j^k-x_\ell^k\|^2
\ge \sum_{\{s,\ell\}\in \mathcal{T}^k} \eta^2 \|x_s^k-x_\ell^k\|^2.\]
Therefore, for all $x\in X$ and all $k\ge0$,
\[\sum_{i=1}^m\|v_{i}^k -x\|^2 
\le \sum_{j=1}^{m} \|x_{j}^{k}-x \|^2 - \eta^2 \sum_{\{s,\ell\}\in \mathcal{T}^k}  \|x_s^k-x_\ell^k\|^2,\]
and the stated relation follows by substituting the preceding relation in equation~\eqref{eq:mid0}.
\end{proof}

Our next lemma provides a relation for the iterates $\theta^k_i$ related to the learning scheme of the algorithm. 
\begin{lemma}\label{lemm:thetadesc}
Let Assumptions~\ref{assump:errormoment} and~\ref{assump:functionsh} hold,
and let the iterates $\theta^k_i$ be generated
by the algorithm~\eqref{aver}--\eqref{eq:projec}.  Then, almost surely,  
we have for all $k\ge 0$,
\begin{align*}&
\sum_{i=1}^{m}\bkE \left[\| \theta_i^{k+1} -\theta^* \|^2 | \mathcal{F}_k \right]  \cr&
\leq
\left(1  - 2\gamma_{k}\kappa + \gamma_{k}^2 R_{\theta}^2\right)
\sum_{i=1}^{m}\|\theta_i^{k} -\theta^* \|^2 + m\gamma_{k}^2\nu_{\theta}^2,
\end{align*}
with $\theta^*=\argmin_{\theta\in\Theta} h(\theta)$.
\end{lemma}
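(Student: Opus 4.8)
The plan is to follow the standard analysis of projected stochastic gradient descent for a strongly convex objective, treating each agent's learning iterate separately and then summing over $i=1,\ldots,m$. The starting observation is that $\theta^*$ is a fixed point of the projected gradient map: since $h$ is convex and $\theta^*$ minimizes $h$ over the closed convex set $\Theta$, the first-order condition $(\theta-\theta^*)^T\nabla h(\theta^*)\ge 0$ for all $\theta\in\Theta$ is equivalent to $\theta^*=\Pi_\Theta\!\left(\theta^*-\gamma_k\nabla h(\theta^*)\right)$ for every $\gamma_k>0$. Combining this with the nonexpansiveness of $\Pi_\Theta$ and the update rule~\eqref{eq:projec}, I would obtain, for each $i$,
\[
\|\theta_i^{k+1}-\theta^*\|^2 \le \left\| (\theta_i^k-\theta^*) - \gamma_k\big(\nabla h(\theta_i^k)-\nabla h(\theta^*)\big) - \gamma_k\beta_i^k \right\|^2 .
\]

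Next I would expand the square, isolate the terms involving $\beta_i^k$, and take the conditional expectation with respect to $\mathcal{F}_k$. Because $\theta_i^k$ — and hence $\nabla h(\theta_i^k)$ — is $\mathcal{F}_k$-measurable, Assumption~\ref{assump:errormoment}(a) makes the cross term vanish, $\bkE[(\,\cdot\,)^T\beta_i^k\mid\mathcal{F}_k]=0$, while Assumption~\ref{assump:errormoment}(b) bounds $\bkE[\|\beta_i^k\|^2\mid\mathcal{F}_k]\le\nu_\theta^2$ almost surely. This leaves
\[
\bkE[\|\theta_i^{k+1}-\theta^*\|^2\mid\mathcal{F}_k] \le \|\theta_i^k-\theta^*\|^2 - 2\gamma_k(\theta_i^k-\theta^*)^T\big(\nabla h(\theta_i^k)-\nabla h(\theta^*)\big) + \gamma_k^2\|\nabla h(\theta_i^k)-\nabla h(\theta^*)\|^2 + \gamma_k^2\nu_\theta^2 .
\]

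To finish, I would invoke Assumption~\ref{assump:functionsh}: strong convexity of $h$ with constant $\kappa$ gives $(\theta_i^k-\theta^*)^T(\nabla h(\theta_i^k)-\nabla h(\theta^*))\ge\kappa\|\theta_i^k-\theta^*\|^2$, and Lipschitz continuity of $\nabla h$ with constant $R_\theta$ gives $\|\nabla h(\theta_i^k)-\nabla h(\theta^*)\|^2\le R_\theta^2\|\theta_i^k-\theta^*\|^2$. Substituting yields the per-agent recursion $\bkE[\|\theta_i^{k+1}-\theta^*\|^2\mid\mathcal{F}_k]\le(1-2\gamma_k\kappa+\gamma_k^2R_\theta^2)\|\theta_i^k-\theta^*\|^2+\gamma_k^2\nu_\theta^2$, and summing over $i=1,\ldots,m$ gives the asserted inequality.

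There is no deep obstacle here; the one point requiring care is that $\Theta$ is only assumed closed and convex, so $\nabla h(\theta^*)$ need not be zero. Writing the gradient step around the fixed-point identity $\theta^*=\Pi_\Theta(\theta^*-\gamma_k\nabla h(\theta^*))$ — rather than keeping $\nabla h(\theta_i^k)$ on its own — is precisely what makes the Lipschitz bound applicable and produces the coefficient $1-2\gamma_k\kappa+\gamma_k^2R_\theta^2$ (which need not be nonnegative, but that is immaterial for the inequality). One should also track the ``almost surely'' qualifier throughout, as it is inherited from the conditional second-moment bound on $\beta_i^k$; this is routine.
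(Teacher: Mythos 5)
Your proposal is correct and follows essentially the same route as the paper's proof: both center the analysis on the fixed-point identity $\theta^*=\Pi_\Theta(\theta^*-\gamma_k\nabla h(\theta^*))$, apply nonexpansiveness of the projection, use strong monotonicity and Lipschitz continuity of $\nabla h$ to obtain the coefficient $1-2\gamma_k\kappa+\gamma_k^2R_\theta^2$, eliminate the cross term and bound the noise via Assumption~\ref{assump:errormoment}, and sum over the agents. The minor reordering of when the conditional expectation is taken is immaterial.
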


\begin{proof}
By using the nonexpansivity of the projection operator, the
strong monotonicity and Lipschitz continuity of $\nabla_{\theta}
h(\theta)$, and by recalling the
relation $\theta^*  =  \Pi_{\theta} \left[\theta^* - \gamma_k \nabla_{\theta}
h(\theta^*)\right], $
we obtain the following relation
\begin{align*}
&\|  \theta_i^{k+1}   -\theta^{*} \|^2\\
& \leq
\|\theta_i^{k}-\gamma_{k}(\nabla h(\theta_i^{k})+ \beta_i^k)
	-\theta^* + \gamma_k
	\nabla h(\theta^*)\|^2 \\
	& = \|\theta_i^{k} -\theta^* \|^2
+ \gamma_{k}^2 \|\nabla h(\theta_i^k) -\nabla h(\theta^*)\|^2
+ \gamma_{k}^2 \|\beta_i^k \|^2 \\
&- 2\gamma_{k}(\nabla h(\theta_i^{k})-\nabla h(\theta^*))^{T}(\theta_i^k-\theta^*) \\
& - 2\gamma_{k}(\beta_i^k)^T(\theta_i^k-\theta^* -\gamma_{k}(\nabla
			h(\theta_i^{k})-\nabla h(\theta^*))) \\
	& \leq (1-2\gamma_k \kappa + \gamma_k^2 R_{\theta}^2) \|\theta_i^k
	-\theta^*\|^2 + \gamma_{k}^2 \|\beta_i^k \|^2 \\
&  - 2\gamma_{k}(\beta_i^k)^T(\theta_i^k-\theta^* -\gamma_{k}(\nabla
			h(\theta_i^{k})-\nabla h(\theta^*))).
\end{align*}
Taking conditional expectation with respect to the past $\mathcal{F}_k$, we see that almost surely for all $i$ and $k$,
$$ \bkE \left[ \|\theta_i^{k+1}-\theta^*\|^2\mid {\cal F}_k\right]
	\leq (1-2\gamma_k \kappa + \gamma_k^2 R_{\theta}^2) 
	\|\theta_i^k -\theta^*\|^2  + \gamma_{k}^2 \nu_{\theta}^2,$$ 
	since $\mathbb{E}[(\beta_i^k)^T(\theta_i^k-\theta^* -\gamma_{k}(\nabla
			h(\theta_i^{k})-\nabla h(\theta^*))) \mid {\cal F}_k] = 0$ 
			and  $\mathbb{E}[\|\beta_i^k \|^2\mid {\cal F}_k] \le \nu_{\theta}^2$
			(by Assumption~\ref{assump:errormoment}). 
By summing the preceding relations over $i$, we obtain the stated result.
\end{proof}


The following lemma gives a key result 
that combines the decrease properties for $x$- and $\theta$-iterates 
established in Lemmas~\ref{lemm:xdesc} and~\ref{lemm:thetadesc}.
\begin{lemma}\label{lemma:xandtheta}
Let Assumptions~\ref{assump:doubstoch}--\ref{assump:functionsh} hold,
and let $X=\cap_{i=1}^m X_i$. 
Let the sequences $\{x^k_i\},\{\theta_i^k\}$, $i\in{\cal N}$, be generated
according to~\eqref{aver}--\eqref{eq:projec},  and define 
\[V(x^k,\theta^k;x):=\sum_{i= 1}^{m} \left( \|x_{i}^k
		-x\|^2  + \|\theta_i^{k} -\theta^* \|^2 \right) 
		\hbox{for all} x\in X.\]
		 Then, for all $x\in X$, all $k\ge 0$, and all $\ell\in{\cal N},$
the following relation holds almost surely
\begin{align*}
&\bkE [V(x^{k+1},\theta^{k+1};x) \mid \mathcal{F}_k]  
 \leq V(x^k,\theta^k;x)  + m\alpha_k^{2-\tau}L_{\theta}^2 D^2\cr
& -\eta^2\sum_{\{j,s\}\in{\cal T}^k} \|x_j^k-x_s^k\|^2 
 +m\alpha_k^2 (2S^2 + \nu^2) \cr
& + 2\alpha_k G \sum_{j=1}^m\|x_j^k - z^k\| 
 -2\alpha_k\left( f(z^k,\theta^*) -f(x,\theta^*) \right)\cr
& -\gamma_k\left(2\kappa - \gamma_{k} R_{\theta}^2 -\frac{\alpha_k^{\tau} + 2\alpha^2_kL_{\theta}^2}{\g_k}\right)   
\sum_{i = 1}^{m} \|\theta_i^{k} -\theta^* \|^2 \\
& + m\gamma_{k}^2\nu_{\theta}^2,
\end{align*}
where 
\[z^k=\Pi_X[y^k] \quad\hbox{with } y^k=\frac{1}{m}\sum_{j=1}^m x_j^k\quad \qquad\hbox{for all }k\ge0,\]
	$\mathcal{T}^k$ denotes a spanning tree in $\mathcal{G}^k,$
while $S, \bar X$
and $G$ are defined as 
$S\triangleq\max_i\max_{x\in \bar X}\| \nabla_x f_i(x,\theta^*) \|$, 
$\bar X \triangleq \mbox{conv}(\cup_{i=1}^m X_i)$, and
$G\triangleq\max_{i\in{\cal N}}\max_{z_\in X}\|\nabla_x
	f_i(z,\theta^*)\|$.

\end{lemma}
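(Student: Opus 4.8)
The plan is to add the two one-step descent bounds already in hand --- Lemma~\ref{lemm:xdesc} for the $x$-iterates and Lemma~\ref{lemm:thetadesc} for the $\theta$-iterates --- and then repackage the resulting right-hand side. Since $V(x^k,\theta^k;x)=\sum_{i=1}^m\|x_i^k-x\|^2+\sum_{i=1}^m\|\theta_i^k-\theta^*\|^2$, summing the conditional-expectation estimates of those two lemmas reproduces $V(x^k,\theta^k;x)$ together with the extra multiples of $\sum_{i=1}^m\|\theta_i^k-\theta^*\|^2$ produced by each. Collecting the coefficient of $\sum_{i=1}^m\|\theta_i^k-\theta^*\|^2$, namely $\alpha_k^\tau+2\alpha_k^2L_\theta^2$ from Lemma~\ref{lemm:xdesc} and $-2\gamma_k\kappa+\gamma_k^2R_\theta^2$ from Lemma~\ref{lemm:thetadesc} (the ``$1$'' from the latter being absorbed into $V$), and pulling out a factor $-\gamma_k$ yields exactly $-\gamma_k\bigl(2\kappa-\gamma_kR_\theta^2-\frac{\alpha_k^\tau+2\alpha_k^2L_\theta^2}{\gamma_k}\bigr)$. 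The remaining terms $-\eta^2\sum_{\{j,s\}\in\mathcal{T}^k}\|x_j^k-x_s^k\|^2$, $m\alpha_k^2(2S^2+\nu^2)$, $m\alpha_k^{2-\tau}L_\theta^2D^2$ and $m\gamma_k^2\nu_\theta^2$ transcribe directly.

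The only term needing real work is $-2\alpha_k\sum_{i=1}^m\bigl(f_i(v_i^k,\theta^*)-f_i(x,\theta^*)\bigr)$ coming from Lemma~\ref{lemm:xdesc}, which must be rewritten in terms of the single projected running average $z^k=\Pi_X[y^k]$, $y^k=\frac{1}{m}\sum_{j=1}^m x_j^k$, rather than the agent averages $v_i^k$. Using $f(\cdot,\theta^*)=\sum_{i=1}^m f_i(\cdot,\theta^*)$, I would split
\[
-2\alpha_k\sum_{i=1}^m\bigl(f_i(v_i^k,\theta^*)-f_i(x,\theta^*)\bigr)=2\alpha_k\sum_{i=1}^m\bigl(f_i(z^k,\theta^*)-f_i(v_i^k,\theta^*)\bigr)-2\alpha_k\bigl(f(z^k,\theta^*)-f(x,\theta^*)\bigr).
\]
For the first sum, convexity of $f_i(\cdot,\theta^*)$ gives $f_i(z^k,\theta^*)-f_i(v_i^k,\theta^*)\le\nabla_x f_i(z^k,\theta^*)^T(z^k-v_i^k)\le G\|z^k-v_i^k\|$, which is legitimate because $z^k\in X\subseteq X_i$, so that $\|\nabla_x f_i(z^k,\theta^*)\|\le G$. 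Then the affine averaging identity $v_i^k-z^k=\sum_{j=1}^m a_i^{j,k}(x_j^k-z^k)$ (using $\sum_j a_i^{j,k}=1$), the triangle inequality, and column-stochasticity $\sum_i a_i^{j,k}=1$ (Assumption~\ref{assump:doubstoch}(a)) give $\sum_{i=1}^m\|z^k-v_i^k\|\le\sum_{i=1}^m\sum_{j=1}^m a_i^{j,k}\|x_j^k-z^k\|=\sum_{j=1}^m\|x_j^k-z^k\|$. Hence the first sum is bounded above by $2\alpha_k G\sum_{j=1}^m\|x_j^k-z^k\|$, and substituting this into the summed inequality produces precisely the claimed bound.

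The proof is essentially bookkeeping plus one elementary estimate; the only step with any content is the replacement of the agent averages $v_i^k$ by the common projected average $z^k$, where convexity of the $f_i$ must be combined with the affine averaging relation for $v_i^k$ and the double-stochasticity of $A(k)$ to collapse the double sum $\sum_i\sum_j a_i^{j,k}\|x_j^k-z^k\|$ back to $\sum_j\|x_j^k-z^k\|$. I do not anticipate any genuine obstacle here: no supermartingale or convergence argument enters, since that is postponed to Section~\ref{sec:global}.
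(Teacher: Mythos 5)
Your proposal is correct and follows essentially the same route as the paper: sum the two one-step estimates of Lemmas~\ref{lemm:xdesc} and~\ref{lemm:thetadesc}, fold the coefficient of $\sum_i\|\theta_i^k-\theta^*\|^2$ into the factor $-\gamma_k\bigl(2\kappa-\gamma_kR_\theta^2-\tfrac{\alpha_k^\tau+2\alpha_k^2L_\theta^2}{\gamma_k}\bigr)$, and replace $v_i^k$ by $z^k$ in the function-value term via the gradient inequality at $z^k$, the bound $\|\nabla_x f_i(z^k,\theta^*)\|\le G$, and the double-stochasticity collapse $\sum_i\|v_i^k-z^k\|\le\sum_j\|x_j^k-z^k\|$. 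The only difference is purely notational (you bound $f_i(z^k,\theta^*)-f_i(v_i^k,\theta^*)$ from above where the paper bounds $f_i(v_i^k,\theta^*)-f_i(x,\theta^*)$ from below), so no further comment is needed.
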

\begin{proof} 
By Lemma~\ref{lemm:xdesc} we have almost surely for some $\tau>0$ and 
for all $x\in X$ and all $k\ge 0$,
\begin{align*}
& \sum_{i = 1}^{m}  \bkE  \left[ \|x_{i}^{k+1} -x \|^2 \mid \mathcal{F}_k\right]  \cr 
& \leq \sum_{j = 1}^{m} \|x_{j}^k -x\|^2  
- \eta^2 \sum_{\{s,\ell\}\in \mathcal{T}^k}  \|x_s^k-x_\ell^k\|^2\cr
& +m\alpha_k^2 (2S^2 + \nu^2)
 + m\alpha_k^{2-\tau}L_{\theta}^2 D^2\cr
& + \left(\alpha_k^{\tau} + 2\alpha^2_kL_{\theta}^2\right) 
\sum_{i =1}^{m}\|\theta_i^k-\theta^*\|^2 \\ 
& -2\alpha_k \sum_{i = 1}^{m}\left(f_i(v_i^k,\theta^*)-f_i(x,\theta^*)\right),
\end{align*}
where $\mathcal{T}^k$ is a spanning tree in the graph $\mathcal{G}^k$ and $\theta^*=\argmin_{\theta\in\Theta} h(\theta)$,
which exists  and it is unique in view of the strong convexity of $h$.
By Lemma~\ref{lemm:thetadesc} almost surely we have for all $k\ge 0$,
\begin{align*}
& \quad \sum_{i=1}^{m}\bkE \left[\| \theta_i^{k+1} -\theta^* \|^2 | \mathcal{F}_k \right] \\
& \leq
\left(1  - 2\gamma_{k}\kappa + \gamma_{k}^2 R_{\theta}^2\right)
\sum_{i=1}^{m}\|\theta_i^{k} -\theta^* \|^2 + m\gamma_{k}^2\nu_{\theta}^2.
\end{align*}
By combining Lemmas~\ref{lemm:xdesc} and~\ref{lemm:thetadesc} and using
the notation $V$,
after regrouping some terms, we see that for all $x\in X$ and
all $k\ge 0$, {the following holds almost surely:} 
\begin{align*}
& \bkE  \left[ V(x^{k+1},\theta^{k+1};x) \mid \mathcal{F}_k\right] 
\leq  V(x^k,\theta^k;x)+ m\alpha_k^{2-\tau}L_{\theta}^2 D^2 
\cr
& - \eta^2 \sum_{\{s,\ell\}\in \mathcal{T}^k}  \|x_s^k-x_\ell^k\|^2+m\alpha_k^2 (2S^2 + \nu^2)
 \cr
 & + \left(\alpha_k^{\tau} + 2\alpha^2_kL_{\theta}^2\right) 
\sum_{i =1}^{m}\|\theta_i^k-\theta^*\|^2
\end{align*}
\begin{align} \label{eq:le4r1}
& -2\alpha_k \sum_{i =
	1}^{m}\left(f_i(v_i^k,\theta^*)-f_i(x,\theta^*)\right)\notag \\ 
& -\gamma_k\left(2\kappa - \gamma_{k} R_{\theta}^2\right)  
\sum_{i = 1}^{m} \|\theta_i^{k} -\theta^* \|^2
+ m\gamma_{k}^2\nu_{\theta}^2.
\end{align}
Next, we work with the term involving the function values. We consider the summand  
$f_i(v_i^k,\theta^*)-f_i(x,\theta^*)$. Define 
\[y^k=\frac{1}{m}\sum_{j=1}^m x_j^k,\quad z^k=\Pi_X[y^k]\qquad\hbox{for all }k\ge0.\]
By adding and subtracting $f_i(z^k)$,
and by using the convexity of $f_i(\cdot,\theta^*)$ (see Assumption~\ref{assump:functionsf}), 
we can show that 
\begin{align*}
& \quad f_i(v_i^k,\theta^*)-f_i(x,\theta^*) \\
& \ge    f_i(z^k,\theta^*)^T (v_i^k - z^k) + f_i(z^k,\theta^*)-f_i(x,\theta^*)\cr
&  \ge    - \|\nabla_x f_i(z^k,\theta^*)\| \|v_i^k - z^k\| + f_i(z^k,\theta^*)-f_i(x,\theta^*).
\end{align*}
Since $X_\ell$ is bounded for every $\ell$ and $z^k\in X=\cap_{i=1}^m X_i$, it follows that  
\[\max_{k\ge0}
\|\nabla_x f_i(z^k,\theta^*)\|\le 
G\triangleq\max_{i\in{\cal N}}\left(\max_{y\in X}\|\nabla_x f_i(y,\theta^*)\|\right).\]
Thus, $\|\nabla_x f_i(z^k,\theta^*)\| \|v_i^k - z^k\|
\le G \|v_i^k - z^k\|$, implying 
\begin{align*}
& \quad \sum_{i=1}^m \left(f_i(v_i^k,\theta^*)-f_i(x,\theta^*)\right) \\
& \ge  - G  \sum_{i=1}^m\|v_i^k - z^k\| + f(z^k,\theta^*) - f(x,\theta^*),
\end{align*}
where we also use notation $f(\cdot,\theta)=\sum_{i=1}^m f_i(\cdot,\theta)$.
Recalling the definition of $v_i^k$, and by using the doubly-stochastic property of the weights and
 the convexity of the Euclidean norm, we can see that
$\sum_{i=1}^m\|v_i^k - z^k\|\le \sum_{i=1}^m\sum_{j=1}^m a_i^{j,k}\|x_j^k - z^k\|=\sum_{j=1}^m \|x_j^k - z^k\|.$
Hence,
\begin{align}\label{eq:le4r2}
& \quad \sum_{i=1}^m
\left(f_i(v_i^k,\theta^*)-f_i(x,\theta^*)\right)\notag \\
& \ge  - G  \sum_{j=1}^m\|x_j^k - z^k\| 
+ f(z^k,\theta^*) - f(x,\theta^*).
\end{align}
Using~\eqref{eq:le4r2} in inequality~\eqref{eq:le4r1} yields the stated result.
\end{proof}

\subsection{Averages and Constraint Sets Intersection}\label{sec:aver}
Now, we focus on developing a relation that will be useful for providing a bound 
on the distance of the iterate averages $y^k=\frac{1}{m}\sum_{j=1}^m x_j^k$ and the intersection set $X=\cap_{i=1}^m X_i$. Specifically, 
the goal is to have a bound
for  $ \sum_{j=1}^m\|x_j^k - z^k\|$, which will allow us to leverage on Lemma~\ref{lemma:xandtheta} and prove 
the almost sure convergence of the method. 
We provide such a bound for generic points $x_1,\ldots,x_m$ taken from sets $X_1,\ldots, X_m$, respectively.
For this, we strengthen Assumption~\ref{assump:bnd}(b) on the sets $X_i$
by requiring  that the interior of $X$ is nonempty. This assumption has also been used in~\cite{nedich10consensus}
to ensure that the iterates $x_i^k\in X_i$ have accumulation points in $X$. 
This assumption and its role
in such set dependent iterates has been originally illuminated in~\cite{GPR67}.

\begin{assumption}
\label{assump:slater}
There exists a vector $\bar{x} \in {\rm int}(X)$, i.e., there exists a scalar $\delta > 0$ such that
$\left\{z \hspace{1mm}|\hspace{1mm} \|z - \bar x\|  \leq \delta \right\} \subset X$.
\end{assumption}
By using Lemma 2(b) from~\cite{nedich10consensus}
and boundedness of the sets $X_i$, we establish  an upper bound for
$\sum_{j=1}^m \|x_j - \Pi_X[\frac{1}{m}\sum_{\ell=1}^m x_\ell]\|$ for arbitrary points $x_i\in X_i$, 
as given in the following lemma.
\begin{lemma}\label{lemma:xaverbound}
Let Assumptions~\ref{assump:bnd} and~\ref{assump:slater} hold.
Then, for the vector $\hat x = \frac{1}{m}\sum_{\ell=1}^m x_\ell$, with $x_\ell\in X_\ell$ for all $\ell$,
we have
\[\sum_{j=1}^m \|x_j - \Pi_X[\hat x]\|
\le m \left(1+\frac{mD}{\delta}\right) \max_{j,\ell\in{\cal N}}\|x_j - x_\ell\|.\]
\end{lemma}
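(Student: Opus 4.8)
Write $z\triangleq\Pi_X[\hat x]$ and $d\triangleq\max_{j,\ell\in{\cal N}}\|x_j-x_\ell\|$. The plan is to split each summand by the triangle inequality through the average,
$\|x_j-z\|\le\|x_j-\hat x\|+\|\hat x-z\|=\|x_j-\hat x\|+\mathrm{dist}(\hat x,X)$,
so that after summing over $j$ it remains to control two quantities: the ``averaging spread'' $\sum_{j=1}^m\|x_j-\hat x\|$ and the ``distance to the intersection'' $\mathrm{dist}(\hat x,X)$. The first is routine; the second is the real content.

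For the averaging spread, since $x_j-\hat x=\tfrac1m\sum_{\ell=1}^m(x_j-x_\ell)$, convexity of the norm gives $\|x_j-\hat x\|\le\tfrac1m\sum_{\ell=1}^m\|x_j-x_\ell\|\le d$, hence $\sum_{j=1}^m\|x_j-\hat x\|\le md$. (One can even get $(m-1)d$, but this is not needed.)

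The substantive step is to bound $\mathrm{dist}(\hat x,X)$ by a constant times $d$; this is where Assumption~\ref{assump:slater} and the boundedness of the $X_i$ enter, and I expect it to be the main obstacle, since it is a bounded-linear-regularity-type estimate rather than a mechanical manipulation. I would argue directly as follows (this is essentially the content of Lemma~2(b) of~\cite{nedich10consensus}, which one may instead cite verbatim). First, $\hat x$ is close to every $X_i$: from $\hat x-x_i=\tfrac1m\sum_{\ell\neq i}(x_\ell-x_i)$ we get $\mathrm{dist}(\hat x,X_i)\le\|\hat x-x_i\|\le d$ for each $i$. Now dilate $\hat x$ toward the interior point $\bar x$: put $w_\lambda\triangleq(1-\lambda)\hat x+\lambda\bar x$ for $\lambda\in[0,1]$, and write $\hat x=\Pi_{X_i}[\hat x]+e_i$ with $\|e_i\|=\mathrm{dist}(\hat x,X_i)\le d$. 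Then $w_\lambda=\big[(1-\lambda)\Pi_{X_i}[\hat x]+\lambda\bar x\big]+(1-\lambda)e_i$; since $\Pi_{X_i}[\hat x]\in X_i$ and $B(\bar x,\delta)\subseteq X\subseteq X_i$, an ``ice-cream cone'' convexity argument shows that the ball of radius $\lambda\delta$ about $(1-\lambda)\Pi_{X_i}[\hat x]+\lambda\bar x$ lies in $X_i$, so $w_\lambda\in X_i$ whenever $(1-\lambda)d\le\lambda\delta$, i.e. for $\lambda=\tfrac{d}{d+\delta}$. As this holds for every $i$, $w_\lambda\in X$, whence $\mathrm{dist}(\hat x,X)\le\|\hat x-w_\lambda\|=\tfrac{d}{d+\delta}\,\|\hat x-\bar x\|\le\tfrac{d}{\delta}\,\|\hat x-\bar x\|$. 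Finally $\bar x\in X\subseteq X_\ell$ and $x_\ell\in X_\ell$ give $\|\hat x-\bar x\|\le\tfrac1m\sum_{\ell}\|x_\ell-\bar x\|\le D$ by~\eqref{assump-bnd}, so $\mathrm{dist}(\hat x,X)\le\tfrac{D}{\delta}\,d$.

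Combining the two pieces, $\sum_{j=1}^m\|x_j-z\|\le\sum_{j=1}^m\|x_j-\hat x\|+m\,\mathrm{dist}(\hat x,X)\le md+m\tfrac{D}{\delta}d=m\big(1+\tfrac{D}{\delta}\big)d$, which yields the claimed bound $m\big(1+\tfrac{mD}{\delta}\big)\max_{j,\ell\in{\cal N}}\|x_j-x_\ell\|$ (indeed with a better constant, since $m\ge1$). Recalling $z=\Pi_X[\hat x]$ completes the argument. The only delicate point is the dilation/ice-cream-cone estimate for $\mathrm{dist}(\hat x,X)$; everything else is the triangle inequality and convexity of the norm.
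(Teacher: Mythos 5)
Your proof is correct and follows essentially the same route the paper intends: the paper establishes this bound by combining Lemma~2(b) of~\cite{nedich10consensus} (the interior-point/dilation estimate $\mathrm{dist}(\hat x,X)\le\frac{\|\hat x-\bar x\|}{\delta}\max_i\mathrm{dist}(\hat x,X_i)$, which you reprove inline) with the boundedness of the sets $X_i$, exactly as you do. Your constant $m(1+\frac{D}{\delta})$ is in fact slightly sharper than the stated $m(1+\frac{mD}{\delta})$, which is harmless since the lemma only claims an upper bound.
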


Under the interior-point assumption, we provide a refinement of
Lemma~\ref{lemma:xandtheta}, which will be the key relation for
establishing the convergence.
\begin{lemma}\label{lemma:key}
Let Assumptions~\ref{assump:doubstoch}--\ref{assump:slater} hold, and
let $X=\cap_{i=1}^m X_i$. 
Let the sequences $\{x^k_i\},\{\theta_i^k\}$
be generated according to~\eqref{aver}--\eqref{eq:projec}.  
Then, 
almost surely, we have for all $x\in X$, all $k\ge 0$, and all $\ell\in{\cal N},$
\begin{align*}
&\quad \bkE [V(x^{k+1},\theta^{k+1};x) \mid \mathcal{F}_k]  
 \leq V(x^k,\theta^k;x) \cr
&- \left(\frac{\eta^2}{m-1} - \a_k^\sigma\right)\max_{j,s\in \mathcal{N} }  \|x_j^k-x_s^k\|^2 \cr
& +m\alpha_k^2 (2S^2 + \nu^2) + m\alpha_k^{2-\tau}L_{\theta}^2 D^2\cr
& + \alpha_k^{2-\sigma} G^2 m^2 \left(1+\frac{mD}{\delta}\right)^2
 -2\alpha_k\left( f(z^k,\theta^*) -f(x,\theta^*) \right)\cr
& -\gamma_k\left(2\kappa - \gamma_{k} R_{\theta}^2 -\frac{\alpha_k^{\tau} + 2\alpha^2_kL_{\theta}^2}{\g_k}\right)   
\sum_{i = 1}^{m} \|\theta_i^{k} -\theta^* \|^2\cr
&+ m\gamma_{k}^2\nu_{\theta}^2,
\end{align*}
where $\sigma>0$, while $z^k$, $y^k$ and other variables and constants are the same as in Lemma~\ref{lemma:xandtheta}.
\end{lemma}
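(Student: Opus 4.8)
The plan is to start from the conclusion of Lemma~\ref{lemma:xandtheta}, which already bundles the $x$- and $\theta$-descent into the Lyapunov function $V$, and then feed in the geometric bound of Lemma~\ref{lemma:xaverbound} to replace the crude term $2\alpha_k G\sum_{j=1}^m\|x_j^k-z^k\|$ by something that can be absorbed. First I would recall from Lemma~\ref{lemma:xandtheta} that, almost surely, for all $x\in X$ and all $k\ge 0$,
\begin{align*}
\bkE[V(x^{k+1},\theta^{k+1};x)\mid\mathcal{F}_k]
&\le V(x^k,\theta^k;x) + m\alpha_k^{2-\tau}L_\theta^2 D^2
-\eta^2\sum_{\{j,s\}\in\mathcal{T}^k}\|x_j^k-x_s^k\|^2 \\
&\quad + m\alpha_k^2(2S^2+\nu^2)
+ 2\alpha_k G\sum_{j=1}^m\|x_j^k-z^k\|
-2\alpha_k\bigl(f(z^k,\theta^*)-f(x,\theta^*)\bigr) \\
&\quad -\gamma_k\Bigl(2\kappa-\gamma_kR_\theta^2-\tfrac{\alpha_k^\tau+2\alpha_k^2L_\theta^2}{\gamma_k}\Bigr)\sum_{i=1}^m\|\theta_i^k-\theta^*\|^2
+ m\gamma_k^2\nu_\theta^2.
\end{align*}
The only terms that differ from the claimed inequality are the spanning-tree term and the $z^k$-term, so the whole argument reduces to handling those two.

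Next I would apply Lemma~\ref{lemma:xaverbound} with $x_j=x_j^k$, giving $\sum_{j=1}^m\|x_j^k-z^k\|\le m(1+mD/\delta)\max_{j,s\in\mathcal{N}}\|x_j^k-x_s^k\|$, and then split the product $2\alpha_k G\sum_{j=1}^m\|x_j^k-z^k\|$ using Young's inequality in the form $2ab\le \alpha_k^{\sigma}a^2+\alpha_k^{-\sigma}b^2$ with $a=\max_{j,s}\|x_j^k-x_s^k\|$, $b=Gm(1+mD/\delta)$ and an arbitrary fixed $\sigma>0$. This produces the term $\alpha_k^{2-\sigma}G^2m^2(1+mD/\delta)^2$ together with a term $\alpha_k^\sigma\max_{j,s\in\mathcal{N}}\|x_j^k-x_s^k\|^2$. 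For the spanning-tree term I would use that $\mathcal{T}^k$ is a spanning tree on $m$ nodes, hence has $m-1$ edges each of which is an edge of the complete graph, and that for points lying in bounded sets the maximum pairwise distance is controlled by the graph's distances; more simply, since $\mathcal{T}^k$ connects all $m$ nodes, for the pair $(j^*,s^*)$ achieving $\max_{j,s}\|x_j^k-x_s^k\|$ there is a path in $\mathcal{T}^k$ of length at most $m-1$, so by the triangle inequality and Cauchy--Schwarz $\|x_{j^*}^k-x_{s^*}^k\|^2\le (m-1)\sum_{\{j,s\}\in\mathcal{T}^k}\|x_j^k-x_s^k\|^2$, i.e. $\sum_{\{j,s\}\in\mathcal{T}^k}\|x_j^k-x_s^k\|^2\ge\frac{1}{m-1}\max_{j,s\in\mathcal{N}}\|x_j^k-x_s^k\|^2$. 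Combining these two bounds replaces $-\eta^2\sum_{\mathcal{T}^k}(\cdot)+\alpha_k^\sigma\max(\cdot)$ by $-\bigl(\tfrac{\eta^2}{m-1}-\alpha_k^\sigma\bigr)\max_{j,s}\|x_j^k-x_s^k\|^2$, which is exactly the first bracket in the claimed inequality; all remaining terms carry over verbatim, so substituting back yields the statement.

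The main obstacle, and the step that needs the most care, is the path-length argument bounding $\max_{j,s}\|x_j^k-x_s^k\|^2$ by a constant multiple of $\sum_{\{j,s\}\in\mathcal{T}^k}\|x_j^k-x_s^k\|^2$ uniformly over all possible spanning trees $\mathcal{T}^k$: one must be sure the constant $m-1$ (rather than something graph-dependent) works for every tree, which it does because a tree on $m$ vertices has diameter at most $m-1$. I also need to double-check that the arbitrary exponent $\sigma$ introduced by Young's inequality is genuinely free (it is, exactly as $\tau$ was in Lemma~\ref{lemm:xdesc}), and that the factor $\frac{1}{m-1}$ matches the normalization used later in Section~\ref{sec:global}; everything else is bookkeeping on the already-established descent relation.
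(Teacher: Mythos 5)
Your proposal is correct and follows exactly the route the paper intends (the proof is deferred to the extended version, but the structure of the bound — the $\frac{\eta^2}{m-1}$ factor, the free exponent $\sigma$, and the $\alpha_k^{2-\sigma}G^2m^2(1+mD/\delta)^2$ term — pins it down): start from Lemma~\ref{lemma:xandtheta}, bound $\sum_j\|x_j^k-z^k\|$ via Lemma~\ref{lemma:xaverbound}, split $2\alpha_k ab\le\alpha_k^\sigma a^2+\alpha_k^{2-\sigma}b^2$, and absorb the resulting $\alpha_k^\sigma\max_{j,s}\|x_j^k-x_s^k\|^2$ into the spanning-tree term via the path/Cauchy--Schwarz bound $\max_{j,s}\|x_j^k-x_s^k\|^2\le(m-1)\sum_{\{j,s\}\in\mathcal{T}^k}\|x_j^k-x_s^k\|^2$. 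The only nit is that the Young-type inequality you quote should be stated as $2\alpha_k ab\le\alpha_k^{\sigma}a^2+\alpha_k^{2-\sigma}b^2$ (i.e., with the stepsize split as $\alpha_k=\alpha_k^{\sigma/2}\cdot\alpha_k^{1-\sigma/2}$, exactly as done for $\tau$ in Lemma~\ref{lemm:xdesc}), since the form $2ab\le\alpha_k^{\sigma}a^2+\alpha_k^{-\sigma}b^2$ with $b=Gm(1+mD/\delta)$ does not directly produce the stated exponents.
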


\section{Almost sure convergence}\label{sec:global}
We now prove the almost sure convergence of the sequences
	produced by the algorithm for suitably selected stepsizes $\a_k$ and $\g_k$.
In particular, we impose the following requirements on the stepsizes.
\begin{assumption}[Stepsize sequences]
\label{assump:stepseq}
The steplength sequences $\{\alpha_k\}$ and $\{\gamma_k\}$
satisfy the following conditions :
$$ \sum_{k=0}^{\infty} \gamma_k = \infty,\qquad
		\sum_{k=0}^{\infty} \gamma_k^2 < \infty, \qquad \sum_{k=0}^{\infty} \alpha_k = \infty,$$
and for some $\tau \in(0,2),$
$$\sum_{k=0}^{\infty} \alpha_k^{2-\tau} < \infty,\qquad
\lim_{k \rightarrow \infty}\frac{\alpha_k^{\tau}}{\gamma_k} = 0.$$
\end{assumption}

\textbf{Example for the stepsizes:} 
A set of choices satisfying the above assumptions  are 
$\gamma_k = k^{-a_1}$ and $\alpha_k = k^{-a_2}$, where 
\begin{itemize}
\item $1 > a_2 > a_1 > \frac{1}{2}$; 
	\item $a_2 (2-\tau) > 1 \implies \tau < 2 - 1/a_2$; 
	\item $a_1 < \tau a_2 \implies \tau > a_2 /a_1.$
\end{itemize}
There is an
	infinite set of choices for $(a_1,a_2,\tau)$ that satisfy these conditions; a concrete example is
$(a_1,a_2,\tau) =  (0.51,0.9,0.75)$. Note that $a_2 > a_1$ implies that
	the steplength sequence employed in computation decays faster than
	the corresponding sequence of the learning updates.  

To analyze the behavior of the  sequences $\{\theta_i^k\}$, $i\in{\cal N}$,
we leverage the following super-martingale convergence result  
from~\cite[Lemma~10, page~49]{polyak87introduction}.
\begin{lemma}\label{lemm:expseq1}
Let $\{v_k\}$ be a sequence of
nonnegative random variables adapted to $\sigma$-algebra $\tilde{\mathcal{F}}_k$ and
such that almost surely
$$\bkE[v_{k+1} \mid \tilde{\mathcal{F}_{k}}] \leq  (1-u_k) v_k + \psi_k\quad \hbox{for all }k \geq 0, $$
where $0 \leq u_k \leq 1, \psi_k \geq  0$, 
$\sum_{k=0}^{\infty} u_k = \infty,
\sum_{k=0}^{\infty} \psi_k < \infty,$ and $\lim_{k \to \infty} \frac{\psi_{k}}{u_{k}} = 0.$
Then, almost surely  $\lim_{k\to\infty} v_k =0$.
\end{lemma}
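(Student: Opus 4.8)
The plan is to obtain the conclusion from two ingredients: the almost-sure convergence theorem for nonnegative supermartingales, and the divergence $\sum_k u_k=\infty$. First I would rewrite the hypothesis in the form $\bkE[v_{k+1}\mid\tilde{\mathcal{F}}_k]\le v_k-u_kv_k+\psi_k$ and, using only $\sum_k\psi_k<\infty$, pass to the shifted process $W_k:=v_k+\sum_{j\ge k}\psi_j$. A one-line computation (the tail sum is deterministic, so it pulls out of the conditional expectation) gives $\bkE[W_{k+1}\mid\tilde{\mathcal{F}}_k]\le W_k-u_kv_k\le W_k$, so $\{W_k\}$ is a nonnegative supermartingale and hence converges almost surely to a finite limit. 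Since $\sum_{j\ge k}\psi_j\to 0$, the original sequence converges almost surely too, say $v_k\to v_\infty$ with $v_\infty\ge 0$.

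Second, I would extract the ``compensator'' information already present in the same inequality, in order to learn that $\sum_k u_kv_k<\infty$ almost surely. For this, consider $M_k:=W_k+\sum_{j=0}^{k-1}u_jv_j$; since $u_kv_k$ is $\tilde{\mathcal{F}}_k$-measurable and nonnegative, $M_k\ge 0$ and $\bkE[M_{k+1}\mid\tilde{\mathcal{F}}_k]\le M_k$, so $\{M_k\}$ is again a nonnegative supermartingale and converges almost surely. Because $W_k$ already converges almost surely, the partial sums $\sum_{j=0}^{k-1}u_jv_j=M_k-W_k$ converge, and as the terms are nonnegative this means $\sum_{k=0}^{\infty}u_kv_k<\infty$ almost surely. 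I route everything through the auxiliary supermartingales $W_k$ and $M_k$ precisely so as not to have to assume $\bkE[v_0]<\infty$; one could equivalently cite the Robbins--Siegmund theorem here as a black box.

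Finally I would combine the two conclusions. On the event $\{v_\infty>0\}$, the convergence $v_k\to v_\infty$ furnishes a (random) index $K$ with $v_k\ge v_\infty/2$ for all $k\ge K$, whence $\sum_{k\ge K}u_kv_k\ge\frac12 v_\infty\sum_{k\ge K}u_k=\infty$ because $\sum_k u_k=\infty$; this contradicts $\sum_k u_kv_k<\infty$. Hence $\mathbb{P}(v_\infty>0)=0$, that is, $v_k\to 0$ almost surely. I note that the hypothesis $\psi_k/u_k\to 0$ is not actually used along this route --- the summability $\sum_k\psi_k<\infty$ carries all the weight --- and would only be needed for the alternative, purely recursive proof that avoids martingale theory (comparing $v_{k+1}-\varepsilon\le(1-u_k)(v_k-\varepsilon)$ for $k$ large). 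The one genuinely delicate point is the passage from the conditional-expectation recursion to ``$\sum_k u_kv_k<\infty$ a.s.'' without integrability of $v_0$, which is exactly what the supermartingale $M_k$ is built to handle.
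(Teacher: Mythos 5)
Your proof is correct, but note that the paper does not actually prove this statement: Lemma~\ref{lemm:expseq1} is imported verbatim from \cite[Lemma~10, p.~49]{polyak87introduction}, so there is no in-paper argument to compare against. What you have written is a legitimate self-contained derivation. Your first two steps re-prove, in the special case where the multiplicative perturbation is absent, exactly the Robbins--Siegmund result that the paper separately quotes as Lemma~\ref{lemm:expseq2} (almost sure convergence of $v_k$ together with $\sum_k u_k v_k<\infty$ a.s.), and your third step is the standard contradiction: a positive limit combined with $\sum_k u_k=\infty$ would force $\sum_k u_kv_k=\infty$. Your observation that $\lim_{k}\psi_k/u_k=0$ is not needed once $\sum_k\psi_k<\infty$ is assumed is also correct for the almost-sure conclusion as stated here; that hypothesis earns its keep only in the purely recursive (non-martingale) argument and in the companion claim $\bkE[v_k]\to 0$, which this version of the lemma does not assert. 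Two small caveats. First, the auxiliary processes $W_k$ and $M_k$ do not by themselves dispense with integrability of $v_0$: if $\bkE[v_0]=\infty$ then $\bkE[W_0]=\bkE[M_0]=\infty$ as well, and the textbook convergence theorem for nonnegative supermartingales does not apply directly. The standard repair is to localize on the $\tilde{\mathcal{F}}_0$-measurable events $\{v_0\le M\}$, on which $W_k\,\mathbf{1}_{\{v_0\le M\}}$ and $M_k\,\mathbf{1}_{\{v_0\le M\}}$ are genuine integrable nonnegative supermartingales, and then let $M\to\infty$; you should either say this explicitly or invoke Robbins--Siegmund as the black box you mention, whose proof contains precisely this localization. (In the paper's application the point is moot: compactness of the sets $X_i$ and Assumption~\ref{assump:errormoment}(c) make the relevant $v_0$ integrable.) Second, pulling the tail $\sum_{j\ge k+1}\psi_j$ out of the conditional expectation is valid only because the $\psi_k$ are deterministic sequences, which is consistent with the lemma as stated but worth flagging.
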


Next, we establish a convergence property for the $\theta$-iterates of the algorithm. 
\begin{proposition}[{Almost sure convergence of $\{\theta_i^k\}$}]\label{prop:thetaconv}
Let Assumptions~\ref{assump:errormoment} and~\ref{assump:functionsh} hold. Also, let $\g_k$ satisfy the conditions of Assumption~\ref{assump:stepseq}.
Let the iterates $\theta^k_i$ be generated
according to~\eqref{aver}--\eqref{eq:projec}. If  
$\theta^*=\argmin_{\theta\in\Theta} h(\theta)$, then $\theta_i^k \to
\theta^*$ as $k \to \infty$ in an almost sure sense for $i = 1, \ldots,
	N.$\end{proposition}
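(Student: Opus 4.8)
The plan is to reduce the statement to the scalar super-martingale convergence result of Lemma~\ref{lemm:expseq1}, applied to the aggregate quantity $v_k := \sum_{i=1}^m \|\theta_i^k - \theta^*\|^2$. The first step is simply to invoke Lemma~\ref{lemm:thetadesc}, which (using only Assumptions~\ref{assump:errormoment} and~\ref{assump:functionsh}, already in force) gives, almost surely for all $k\ge 0$,
\[ \bkE[v_{k+1}\mid\mathcal{F}_k] \le \left(1 - 2\gamma_k\kappa + \gamma_k^2 R_{\theta}^2\right) v_k + m\gamma_k^2 \nu_{\theta}^2. \]
This is exactly of the form demanded by Lemma~\ref{lemm:expseq1}, with $\tilde{\mathcal{F}}_k=\mathcal{F}_k$, $u_k = 2\gamma_k\kappa - \gamma_k^2 R_{\theta}^2 = \gamma_k(2\kappa - \gamma_k R_{\theta}^2)$, and $\psi_k = m\gamma_k^2\nu_{\theta}^2 \ge 0$. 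The sequence $\{v_k\}$ is nonnegative and $\mathcal{F}_k$-adapted, and $\mathbb{E}[v_0]<\infty$ by Assumption~\ref{assump:errormoment}(c).

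The second step is to check the structural hypotheses on $(u_k,\psi_k)$ using Assumption~\ref{assump:stepseq}. Since $\sum_k\gamma_k^2<\infty$ forces $\gamma_k\to 0$, there is an index $k_0$ beyond which $\gamma_k R_{\theta}^2 \le \kappa$; hence for $k\ge k_0$ we have $0 \le \kappa\gamma_k \le u_k \le 2\kappa\gamma_k$, and (again since $\gamma_k\to0$) also $u_k\le 1$. Then $\sum_{k\ge k_0}u_k \ge \kappa\sum_{k\ge k_0}\gamma_k = \infty$ because $\sum_k\gamma_k=\infty$; $\sum_{k\ge k_0}\psi_k = m\nu_{\theta}^2\sum_{k\ge k_0}\gamma_k^2 < \infty$; and $\psi_k/u_k \le m\nu_{\theta}^2\gamma_k/\kappa \to 0$. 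Applying Lemma~\ref{lemm:expseq1} to the shifted sequence $\{v_{k_0+j}\}_{j\ge0}$ — the finite prefix $v_0,\dots,v_{k_0-1}$ being almost surely finite (iterate the recursion from $\mathbb{E}[v_0]<\infty$) and hence irrelevant to the limit — yields $v_k\to 0$ almost surely.

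Finally, since $\|\theta_i^k-\theta^*\|^2 \le v_k$ for every $i$, the almost sure convergence $v_k\to0$ gives $\theta_i^k\to\theta^*$ almost surely for all $i\in\mathcal{N}$, which is the claim. I do not expect a real obstacle: the substantive work — converting the strong convexity of $h$, the Lipschitz continuity of $\nabla h$, and the conditional moment bounds on $\beta_i^k$ into the one-step recursion above — has already been done in Lemma~\ref{lemm:thetadesc}. The only point needing a word of care is that $u_k$ need not lie in $[0,1]$ (and may even be negative) during the first few iterations when $\gamma_k$ is large; this is dispatched by the routine remark that Lemma~\ref{lemm:expseq1} is a tail statement and may be started from the index $k_0$ defined above.
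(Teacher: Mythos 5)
Your proposal is correct and follows essentially the same route as the paper: invoke Lemma~\ref{lemm:thetadesc} to obtain the one-step recursion for $v_k=\sum_{i=1}^m\|\theta_i^k-\theta^*\|^2$, use $\gamma_k\to0$ to absorb the $\gamma_k^2R_{\theta}^2$ term past some index, and apply the super-martingale result of Lemma~\ref{lemm:expseq1} to the shifted sequence. The only cosmetic difference is that the paper replaces the contraction factor by $1-\gamma_k\kappa$ for large $k$ before identifying $u_k=\gamma_k\kappa$, whereas you keep $u_k=\gamma_k(2\kappa-\gamma_kR_{\theta}^2)$ and bound it on both sides; the argument is the same.
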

\begin{proof} We provide a brief proof.
By Lemma~\ref{lemm:thetadesc} almost surely for all $k\ge0$, 
\begin{align*}
& \quad \sum_{i=1}^{m}\bkE \left[\| \theta_i^{k+1} -\theta^* \|^2 \mid
\mathcal{F}_k \right]  \cr & \leq
\left(1  - 2\gamma_{k}\kappa + \gamma_{k}^2 R_{\theta}^2\right)
\sum_{i=1}^{m}\|\theta_i^{k} -\theta^* \|^2 + m\gamma_{k}^2\nu_{\theta}^2.
\end{align*}
Using Assumption~\ref{assump:stepseq}, we can show that
for all $k\ge \hat k$, $\gamma_k \leq \frac{\kappa}{R_{\theta}^2}$. Then, 
we have almost surely
\begin{align*}
\sum_{i=1}^{m}\bkE \left[\| \theta_i^{k+1} -\theta^* \|^2 \mid
\mathcal{F}_k \right] & \leq
\left(1  - \gamma_{k}\kappa\right)
\sum_{i=1}^{m}\|\theta_i^{k} -\theta^* \|^2\\
		&+ m\gamma_{k}^2\nu_{\theta}^2.
\end{align*}
To invoke Lemma~\ref{lemm:expseq1}, we define 
$v_k = \sum_{i=1}^{m}\| \theta_i^{k} -\theta^* \|^2$.
Furthermore, $u_k= \gamma_{k}\kappa$ and
$\psi_k=m\gamma_{k}^2\nu_{\theta}^2$ for all $k\ge0$. 
We note that $\sum_{k\ge0} u_k=\infty$,  
$\sum_{k=0}^{\infty} \psi_k < \infty,$ and $\lim_{k \to \infty} \frac{\psi_{k}}{u_{k}} = 0$ by 
Assumption~\ref{assump:stepseq}.
Thus, Lemma~\ref{lemm:expseq1} applies to a shifted sequence
$\{v_k\}_{k\ge \hat k}$
and we conclude that $v_k\to 0$ almost surely.
\end{proof}

Now, we analyze the behavior of $x$-sequences, where we leverage the following 
super-martingale convergence theorem  from~\cite[Lemma~11, page~50]{polyak87introduction}.
\begin{lemma}\label{lemm:expseq2}
Let $v_k, u_k, \psi_k$ and $\delta_k$ be nonnegative random variables adapted to
a $\sigma$-algebra $\tilde {\mathcal{F}}_k$. If almost  surely
$\sum_{k=0}^{\infty} u_k < \infty, \sum_{k=0}^{\infty}\psi_k < \infty$, and
$$\bkE[v_{k+1} \mid \tilde{\mathcal{F}}_k] \leq (1+u_k)v_k - \delta_k + \psi_k\hspace{2mm} \hbox{for all }k \geq 0,$$
then almost surely\ $V_k$ is convergent and
$\sum_{k=0}^{\infty} \delta_k < \infty$.
\end{lemma}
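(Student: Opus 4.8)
The plan is to reduce this (the Robbins--Siegmund almost-supermartingale lemma) to the classical convergence theorem for nonnegative supermartingales, via two changes of variable that remove the multiplicative perturbation $u_k$ and turn the additive terms into a genuine supermartingale; the only real technical obstacle is that the supermartingale one builds naturally is bounded below only by a random, possibly non-integrable quantity, which I would handle by a localization (stopping-time) argument.

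First I would neutralize $u_k$. Since $u_k\ge 0$ and $\sum_k u_k<\infty$ almost surely, the partial products $a_k:=\prod_{j=0}^{k-1}(1+u_j)$ (with $a_0:=1$) satisfy $1\le a_k\le a_{k+1}$ and, because $\log a_k=\sum_{j<k}\log(1+u_j)\le\sum_{j<k}u_j$, converge almost surely to a finite limit $a_\infty\in[1,\infty)$. Note $a_{k+1}=(1+u_k)a_k$ is $\tilde{\mathcal F}_k$-measurable. Dividing the hypothesis by $a_{k+1}$ and setting $\hat v_k:=v_k/a_k$, $\hat\delta_k:=\delta_k/a_{k+1}$, $\hat\psi_k:=\psi_k/a_{k+1}$ gives $\bkE[\hat v_{k+1}\mid\tilde{\mathcal F}_k]\le\hat v_k-\hat\delta_k+\hat\psi_k$, where $\hat\delta_k,\hat\psi_k\ge 0$ are still $\tilde{\mathcal F}_k$-measurable and, crucially, $\sum_k\hat\psi_k\le\sum_k\psi_k<\infty$ almost surely since $a_{k+1}\ge 1$. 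This reduces the problem to the case $u_k\equiv 0$.

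Next I would form the candidate supermartingale $M_k:=\hat v_k+\sum_{j=0}^{k-1}\hat\delta_j-\sum_{j=0}^{k-1}\hat\psi_j$. Folding the new compensators into the drift yields $\bkE[M_{k+1}\mid\tilde{\mathcal F}_k]\le M_k$, so $\{M_k\}$ is a supermartingale, and $M_k\ge-\sum_{j=0}^{k-1}\hat\psi_j\ge-\sum_{j=0}^{\infty}\hat\psi_j$. The difficulty is that this lower bound is random and need not be integrable, so Doob's convergence theorem does not apply directly. To get around it I would fix $c>0$, introduce the stopping time $\tau_c$ at which the nondecreasing sum $\sum_{j}\hat\psi_j$ first reaches level $c$ (a stopping time because the partial sums are adapted), and consider the stopped process $M_{k\wedge\tau_c}$. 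By optional stopping it is again a supermartingale, and up to time $\tau_c$ the $\hat\psi$-sum stays below $c$, so $M_{k\wedge\tau_c}$ is bounded below by the deterministic constant $-c$; hence $M_{k\wedge\tau_c}+c$ is a nonnegative supermartingale and converges almost surely. On the event $\{\tau_c=\infty\}=\{\sum_{j}\hat\psi_j<c\}$ the stopped and unstopped processes coincide, so $M_k$ converges there; letting $c$ run through $1,2,3,\dots$ and using $\sum_j\hat\psi_j<\infty$ almost surely, these events exhaust a probability-one set, and therefore $M_k$ converges almost surely to a finite limit.

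Finally I would unwind the construction. Adding back the (a.s.\ convergent) sum $\sum_{j=0}^{k-1}\hat\psi_j$ shows $\hat v_k+\sum_{j=0}^{k-1}\hat\delta_j$ converges almost surely; since $\hat v_k\ge0$, the nondecreasing partial sums $\sum_{j=0}^{k-1}\hat\delta_j$ stay bounded, so $\sum_j\hat\delta_j<\infty$ almost surely, and then $\hat v_k$, being the difference of two convergent sequences, converges almost surely. Multiplying by $a_k\to a_\infty<\infty$ gives that $v_k=a_k\hat v_k$ converges almost surely, while $\sum_k\delta_k=\sum_k a_{k+1}\hat\delta_k\le a_\infty\sum_k\hat\delta_k<\infty$ almost surely, which is the assertion. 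I expect the localization step --- converting the merely ``almost nonnegative'' supermartingale $M_k$ into a genuinely nonnegative one via stopping, so that the supermartingale convergence theorem applies --- to be the only delicate part; everything else is bookkeeping with the scaling factors $a_k$ and telescoping of the compensators.
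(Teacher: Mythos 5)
Your proof is correct, but note that the paper does not prove this lemma at all: it is quoted verbatim as a known result (the Robbins--Siegmund almost-supermartingale lemma) from Polyak's book, cited as Lemma~11, page~50 of~\cite{polyak87introduction}, so there is no in-paper argument to compare against. What you have written is essentially the standard textbook derivation of that cited result, and all the steps check out: the discounting by $a_k=\prod_{j<k}(1+u_j)$ is legitimate because $a_{k+1}$ is $\tilde{\mathcal F}_k$-measurable and $a_k\uparrow a_\infty<\infty$ a.s.\ when $\sum_k u_k<\infty$ a.s.; the compensated process $M_k=\hat v_k+\sum_{j<k}\hat\delta_j-\sum_{j<k}\hat\psi_j$ is indeed a supermartingale; the localization by the stopping time $\tau_c$ correctly turns it into a nonnegative supermartingale on each slice, and the events $\{\tau_c=\infty\}$ exhaust a full-measure set; and the unwinding recovers both the a.s.\ convergence of $v_k$ and the a.s.\ finiteness of $\sum_k\delta_k$. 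The only point you gloss over is integrability: as stated, the lemma does not assume $\mathbb{E}[v_0]<\infty$, so $M_k$ need not be a supermartingale in the classical ($L^1$) sense, and your stopping argument repairs only the lower bound, not the upper one. This is handled in the standard way by conditioning throughout on $\tilde{\mathcal F}_0$ (so that $\mathbb{E}[M_k\mid\tilde{\mathcal F}_0]\le v_0<\infty$ a.s.) or by noting that every application in this paper has square-integrable initial iterates (Assumptions~\ref{assump:errormoment}(c) and~\ref{assump:bnd}); it is a routine technicality rather than a gap, but worth a sentence if you intend the proof to stand alone at the stated level of generality.
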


As observed in Section~\ref{sec:assump}, under continuity of the functions $f_i(\cdot,\theta)$ 
(in view of Assumption~\ref{assump:functionsf}) 
and the compactness
of each $X_i$, the problem $\min_{x\in\cap_{i=1}^m X_i} \sum_{i=1}^m f_i(x,\theta^*)$ has a solution. 
We denote the set of solutions by $X^*$.
Therefore, under Assumptions~\ref{assump:bnd}, \ref{assump:functionsf}, and~\ref{assump:functionsh}, 
the problem~\eqref{eq:prob}
 has a nonempty solution set, given by $X^*\times\{\theta^*\}$.

We have the following convergence result.

\begin{proposition}[Almost sure convergence of $\{x_i^k\}$]\label{prop:xconv}
Let Assumptions~\ref{assump:doubstoch}--\ref{assump:stepseq} hold, and let $X=\cap_{i=1}^m X_i$.
Let the sequences $\{x^k_i\},\{\theta_i^k\}$ be generated
according to~\eqref{aver}--\eqref{eq:projec}.  
Then, the sequences $\{x_j^k\}$ converge almost surely to the same solution point, i.e.,
there exists a random vector $z^*\in X^*$ such that almost surely
\begin{align*}
\lim_{k\to\infty} x_j^k=z^*\qquad\hbox{for all }j\in {\cal N}. 
\end{align*}
\end{proposition}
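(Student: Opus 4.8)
The strategy is to use the key recursion from Lemma~\ref{lemma:key} together with the two super-martingale convergence lemmas (Lemmas~\ref{lemm:expseq1} and~\ref{lemm:expseq2}) and the already-established Proposition~\ref{prop:thetaconv}. First I would substitute the conclusion of Proposition~\ref{prop:thetaconv} into the context: since $\sum_i\|\theta_i^k-\theta^*\|^2\to 0$ almost surely and is bounded (each $X_i$ and $\Theta$ interactions give $\mathbb{E}[\|\theta_i^0\|^2]<\infty$, and Lemma~\ref{lemm:thetadesc} propagates boundedness), the term $(\alpha_k^\tau+2\alpha_k^2 L_\theta^2)\sum_i\|\theta_i^k-\theta^*\|^2$ in Lemma~\ref{lemma:key} is summable almost surely — this is exactly where Assumption~\ref{assump:stepseq}'s condition $\lim_k \alpha_k^\tau/\gamma_k=0$ and the summability of $\gamma_k^2$ feed in. Likewise $\sum_k m\alpha_k^2(2S^2+\nu^2)$, $\sum_k m\alpha_k^{2-\tau}L_\theta^2 D^2$, $\sum_k \alpha_k^{2-\sigma}G^2m^2(1+mD/\delta)^2$ (choosing $\sigma$ small enough that $2-\sigma > $ the required exponent, compatible with the stepsize example) and $\sum_k m\gamma_k^2\nu_\theta^2$ are all finite. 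For $k$ large the coefficient $2\kappa-\gamma_k R_\theta^2 - (\alpha_k^\tau+2\alpha_k^2L_\theta^2)/\gamma_k$ is positive, so that whole $\theta$-term is a nonpositive $-\delta_k$-type contribution.

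Next I would fix an arbitrary $x^*\in X^*$ and apply Lemma~\ref{lemm:expseq2} to $v_k = V(x^k,\theta^k;x^*)$, with $\delta_k$ collecting the two nonnegative terms $(\frac{\eta^2}{m-1}-\alpha_k^\sigma)\max_{j,s}\|x_j^k-x_s^k\|^2$ (nonnegative for $k$ large since $\alpha_k\to 0$) and $2\alpha_k(f(z^k,\theta^*)-f(x^*,\theta^*))\ge 0$ (because $z^k\in X$ and $x^*$ minimizes $f(\cdot,\theta^*)$ over $X$), and $\psi_k$ collecting all the summable error terms identified above. Lemma~\ref{lemm:expseq2} then yields: (i) $V(x^k,\theta^k;x^*)$ converges almost surely for every fixed $x^*\in X^*$; (ii) $\sum_k \alpha_k^\sigma$-weighted... more precisely $\sum_k (\frac{\eta^2}{m-1}-\alpha_k^\sigma)\max_{j,s}\|x_j^k-x_s^k\|^2<\infty$, forcing $\max_{j,s}\|x_j^k-x_s^k\|\to 0$ (consensus) along a subsequence, and since $\sum_k \alpha_k=\infty$ while the summed quantity is finite, actually $\liminf_k \max_{j,s}\|x_j^k-x_s^k\|=0$; and (iii) $\sum_k \alpha_k(f(z^k,\theta^*)-f(x^*,\theta^*))<\infty$, which with $\sum_k\alpha_k=\infty$ gives $\liminf_k (f(z^k,\theta^*)-f(x^*,\theta^*))=0$.

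From (ii), using Lemma~\ref{lemma:xaverbound}, $\sum_{j}\|x_j^k-z^k\|\le m(1+mD/\delta)\max_{j,s}\|x_j^k-x_s^k\|\to 0$ along a subsequence, so on that subsequence $x_j^k$ and $z^k$ have the same limit points. Combining with (iii) and continuity of $f(\cdot,\theta^*)$: there is a subsequence along which $z^k\to$ some $z^*$ with $f(z^*,\theta^*)=f(x^*,\theta^*)$, hence $z^*\in X^*$ (using closedness of $X$ so $z^*\in X$). Then each $x_j^k$ has $z^*$ as a limit point along that subsequence. Finally, to upgrade subsequential convergence to full convergence, I would exploit the fact that $V(x^k,\theta^k;x^*)$ converges almost surely \emph{for every} $x^*\in X^*$ — the standard argument: pick a countable dense subset $\{x^*_\ell\}$ of $X^*$, on a probability-one event all $V(x^k,\theta^k;x^*_\ell)$ converge and (by Proposition~\ref{prop:thetaconv}) $\theta_i^k\to\theta^*$; since a subsequence of $(x_j^k)_j$ converges to the common $z^*\in X^*$, and $\|x_j^k-z^*\|^2 \le V(x^k,\theta^k;z^*) + (\text{terms}\to 0)$ — here one approximates $z^*$ by a dense $x^*_\ell$ to handle that $z^*$ is random — the limit of $V(x^k,\theta^k;z^*)$ must be $0$, forcing $x_j^k\to z^*$ for all $j$.

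\textbf{Main obstacle.} The delicate point is the last step: $z^*$ is a \emph{random} element of $X^*$, so one cannot directly invoke ``$V(x^k,\theta^k;z^*)$ converges'' (that lemma was applied for each \emph{deterministic} $x^*$). The resolution is the countable-dense-subset trick combined with a uniform-continuity/boundedness estimate ($\bar X$ compact) showing $x^* \mapsto \lim_k V(x^k,\theta^k;x^*)$ is (Lipschitz-)continuous on $X^*$, so its value at the random $z^*$ is still well-defined and equals $0$ because it is $0$ along the convergent subsequence. This is exactly the argument used in~\cite{nedich10consensus}, and I would mirror it, taking care that the $\theta$-block of $V$ vanishes in the limit by Proposition~\ref{prop:thetaconv} so only the $x$-block matters.
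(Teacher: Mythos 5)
Your proposal follows essentially the same route as the paper's proof: apply the key recursion with $x=x^*\in X^*$, invoke the super-martingale Lemma~\ref{lemm:expseq2} to get almost sure convergence of $V(x^k,\theta^k;x^*)$ together with summability of the consensus term and of $\alpha_k\bigl(f(z^k,\theta^*)-f(x^*,\theta^*)\bigr)$, extract a subsequence with $z^k\to z^*\in X^*$, and close the loop by evaluating the convergent quantity at $z^*$ (your explicit countable-dense-subset treatment of the random $z^*$ is in fact more careful than the paper, which simply sets $x^*=z^*$). One small correction: from $\sum_k\bigl(\tfrac{\eta^2}{m-1}-\alpha_k^\sigma\bigr)\max_{j,s}\|x_j^k-x_s^k\|^2<\infty$ and $\alpha_k\to 0$ you get $\max_{j,s}\|x_j^k-x_s^k\|\to 0$ along the \emph{whole} sequence (terms of a convergent series vanish), not merely $\liminf=0$ as you state; you need this stronger fact so that consensus holds along the particular subsequence on which $f(z^k,\theta^*)\to f^*$, otherwise your two subsequences need not align.
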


\begin{proof}
In Lemma~\ref{lemma:xandtheta}, we let $x$
be an optimal solution for the problem $\min_{x\in\cap_{i=1}^m X_i} \sum_{i=1}^m f_i(x,\theta^*)$, i.e.,
$x=x^*$ with $x^*\in X^*$. Thus, by Lemma~\ref{lemma:xandtheta} we obtain almost surely
for any $x^*\in X^*$, all $k\ge 0$, and all $\ell\in{\cal N},$
\begin{align*}
& \quad \bkE [{V(x^{k+1},\theta^{k+1};x^*)} \mid \mathcal{F}_k] \\ 
& \leq V(x^k,\theta^k;x^*) - \left(\frac{\eta^2}{m-1} - \a_k^\sigma\right)\max_{j,s\in \mathcal{N} }  \|x_j^k-x_s^k\|^2 \cr
& +m\alpha_k^2 (2S^2 + \nu^2) + m\alpha_k^{2-\tau}L_{\theta}^2 D^2 \cr
&+ \alpha_k^{2-\sigma} G^2 m^2 \left(1+\frac{mD}{\delta}\right)^2
 -2\alpha_k\left( f(z^k,\theta^*) - f(x^*,\theta^*) \right)\cr
& -\gamma_k\left(2\kappa - \gamma_{k} R_{\theta}^2 -\frac{\alpha_k^{\tau} + 2\alpha^2_kL_{\theta}^2}{\g_k}\right)   
\sum_{i = 1}^{m} \|\theta_i^{k} -\theta^* \|^2 \\
& + m\gamma_{k}^2\nu_{\theta}^2.
\end{align*}
Next, since $\sigma$ is an arbitrary positive scalar, we let
$\sigma=\tau$ where $\tau \in (0,2)$ is obtained from
Assumption~\ref{assump:stepseq}. Furthermore,  let $\psi_k$ be
	defined as follows:
\begin{align*}
	\psi_k & \ {\triangleq } \ m\alpha_k^2 (2S^2 + \nu^2)+m\gamma_{k}^2\nu_{\theta}^2 \\
 & + \alpha_k^{2-\tau}\left( mL_{\theta}^2 D^2 +  G^2 m^2
		 \left(1+\frac{mD}{\delta}\right)^2\right).\end{align*}

		 		 
Using the assumptions on the stepsizes we can show that 
for all $k\ge k_0$, we have
$\frac{\eta^2}{m-1} - \a_k^{{\tau}} \ge\epsilon$ and  
\[2\kappa - \gamma_{k} R_{\theta}^2 -\frac{\alpha_k^{\tau} + 2\alpha^2_kL_{\theta}^2}{\g_k}\ge 0.\]
Therefore, almost surely for all $x^*\in X^*$, all $k\ge k_0$, and all $\ell\in{\cal N},$
\begin{align*}
&\bkE [V(x^{k+1},\theta^{k+1};x^*) \mid \mathcal{F}_k]  
\leq V(x^k,\theta^k;x^*) \cr
& - \epsilon \max_{j,s\in \mathcal{N} }  \|x_j^k-x_s^k\|^2 
 -2\alpha_k\left( f(z^k,\theta^*) -f(x^*,\theta^*) \right) 
+\psi_k.
\end{align*}
Recall that $z^k=\Pi_X[y^k]$ with $y^k=\frac{1}{m}\sum_{\ell=1}^m x_\ell^k$. In view of optimality of $x^*$, 
we have $f(z^k,\theta^*) -f(x^*,\theta^*)\ge0$ for all $k$ and $x^*\in X^*$. Furthermore, the conditions on the stepsizes in Assumption~\ref{assump:stepseq} 
Then, we verify that the conditions of Lemma~\ref{lemm:expseq2} are satisfied for
the sequence {$\{V(x^k,\theta^k;x^*)\}_{k \geq k_0}$}
for an arbitrary $x^*\in X^*$. By Lemma~\ref{lemm:expseq2} it follows that
{$V(x^k,\theta^k;x^*)$} is convergent almost surely for every $x^*\in
X^*$, and the following hold almost surely:
\begin{align}
& \sum_{k=0}^\infty  \max_{j,s\in \mathcal{N} }  \|x_j^k-x_s^k\|^2 <\infty,\label{eq:sumx} \\
& \sum_{k=0}^\infty \alpha_k\left( f(z^k,\theta^*)
		-f(x^*,\theta^*) \right) <\infty.\label{eq:sumf}
		\end{align}

By Proposition~\ref{prop:thetaconv}, we have that $\theta_i^k\to\theta^*$ almost surely for all $i\in{\cal N}$. 
Since {$V(x^k,\theta^k;x^*)=\sum_{i= 1}^{m} \left( \|x_{i}^k
		-x^*\|^2  + \|\theta_i^{k} -\theta^* \|^2 \right)$ and}
the assertion that 
$\{V(x^k,\theta^k;x^*)\}$ is convergent almost surely  for every $x^*\in X^*$, we can conclude that 
\begin{align}\label{eq:xseq}
\hbox{$\left\{\sum_{i= 1}^{m} \|x_{i}^k -x^*\|^2\right\}$ is convergent a.s.
	 $\ \forall \ x^*\in X^*$}.
\end{align}
Since $\sum_{k=0}^\infty \a_k=\infty$, the relation~\eqref{eq:sumf}
implies that 
\begin{align}\liminf_{k\to\infty} f(z^k,\theta^*) =f^*,\end{align}
where $f^*$ is the optimal value of the problem, i.e., $f^*=f(x^*,\theta^*)$ for any $x^*\in X^*$.
The set $X$ is bounded (since each $X_j$ is bounded by assumption),  so the sequence $\{z^k\}\subset X$
is also bounded. Let ${\cal K}$ denote the index set of a
subsequence along which the following holds almost surely:
\[\lim_{k\to\infty,k\in{\cal K}} f(z^k,\theta^*) = \liminf_{k\to\infty} f(z^k,\theta^*),\]
\begin{align}\label{eq:zk}
\lim_{k\to\infty,k\in{\cal K}} z^k=z^*\qquad\hbox{with }z^*\in X^*.\end{align}
We note that ${\cal K}$ is a random sequence and $z^*$ is a {randomly
	specified vector from $X^*$}.  Further, relation~\eqref{eq:sumx}
	 implies that 
all the sequences $\{x_j^k\}$, $j=1,\ldots,m$, have the same accumulation points (which exist since the sets $X_j$ are bounded). 
Moreover, since $\{x_j^k\}\subset X_j$ for each $j\in{\cal N}$, it follows that 
the accumulation points of the sequences $\{x_j^k\}$, $j=1,\ldots,m$, must lie in the set $X=\cap_{j=1}^m X_j$.
Without loss of generality we may assume that the limit $\lim_{k\to\infty,k\in{\cal K}} x_j^k$ exists a.s.\ for each $j$,
so that in view of the preceding discussion we have almost surely
\[\lim_{k\to\infty,k\in{\cal K}} x_j^k = \tilde x,\qquad\hbox{with }\tilde x\in X,\]
\[\lim_{k\to\infty,k\in{\cal K}} y^k =\lim_{k\to\infty,k\in{\cal K}} \frac{1}{m}\sum_{\ell=1}^m x_\ell^k = \tilde x.\]
Then, by the continuity of the projection operator $v\mapsto \Pi_X[v]$ and the fact $z_k=\Pi_X[y^k]$, we have almost surely
\[\lim_{k\to\infty,k\in{\cal K}} z^k=\lim_{k\to\infty,k\in{\cal K}} \Pi_X[y^k]=\tilde x.\]
The preceding relation and~\eqref{eq:zk} yield $\tilde x=z^*$, implying that for all $j$ almost surely
\begin{align}\label{eq:partx}
\lim_{k\to\infty,k\in{\cal K}} x_j^k = z^*,\qquad\hbox{with }z^*\in X^*.\end{align}
Then, we can use $x^*=z^*$ in relation~\eqref{eq:xseq} to conclude that 
$\sum_{i= 1}^{m} \|x_{i}^k -z^*\|^2$ is convergent almost surely. This and the subsequential convergence in~\eqref{eq:partx}
imply that $\sum_{i= 1}^{m} \|x_{i}^k -z^*\|^2\to 0$ almost surely. 
\end{proof}
\noindent {\bf Special cases:} 
We note two  special cases of relevance which arise as a consequence of Propositions~\ref{prop:thetaconv} and~\ref{prop:xconv}.

\noindent { (i) \bf Deterministic optimization and learning:} First, note that if the functions $f_i(x,\theta)$ and $h(\theta)$ are deterministic in that 
the gradients $\nabla_x f(x,\theta)$ and $\nabla h(\theta)$ may be evaluated  at arbitrary
points $x$ and $\theta$, then the results of Propositions~\ref{prop:thetaconv} and~\ref{prop:xconv}
show that $\lim_{k\to\infty} \theta_i^k= \theta^*$ and $\lim_{k\to\infty} x_i^k= x^*$ for some $x^*\in X^*$
and for all $i=1,\ldots,m$.

\noindent { (ii) \bf Correctly specified problems:} Second, now suppose that the parameter $\theta^*$ is known to every agent, so there is no misspecification. 
This case can be treated under algorithm~\eqref{aver}--\eqref{eq:projec} where the iterates $\theta_i^k$ are all fixed at 
$\theta^*$. Formally this can be done by setting the initial parameters to the correct value,
i.e., $\theta_i^0=\theta^*$ for all $i$, and by using the fact that the function $h(\theta^*)$ is known, in which case 
the algorithm reduces to: for all $i = 1, \hdots, m$ and $k\ge0$,
\begin{align}\label{aver2}
& v_{i}^{k}  := \sum_{j=1}^{m}a_{i}^{j,k}x_{j}^{k}\\ 
& x_{i}^{k+1}  := \Pi_{{X_i}} \left( v_{i}^{k} - {\alpha_k} \left( \nabla_x
			f_{i} (v_{i}^{k},\theta^*) + w_i^{k} \right)\right).
	\label{proj-x2}  
\end{align}
By letting $F_i(x)=f_i(x,\theta^*)$ we see that, by Proposition~\ref{prop:xconv}, the iterates of 
the algorithm~\eqref{aver2}--\eqref{proj-x2} converge almost surely to a solution of problem 
$\min_{x\in\cap_{i=1}^m X_i}\sum_{i=1}^m F_i(x)$. Thus, the algorithm solves this problem in a distributed fashion,
where both functions and the sets are distributed among the agents. In particular, this result when reduced to a deterministic case
(i.e., noiseless gradient evaluations) extends the convergence results established in~\cite{nedich10consensus} where
two special cases have been studied; namely, the case when $X_i=X$ for all $i$, and the case when the underlying graph is a complete graph and all weights are equal (i.e., $a_i^{j,k}=\frac{1}{m}$ for all $i,j$ and $k\ge0$).

\noindent {\bf Rate of convergence:} While standard stochastic
gradient methods achieve the optimal rate of convergence in that
$\mathbb{E}[f(x_k,\theta^*) ] - \mathbb{E}[f(x^*,\theta^*)] \leq {\cal
	O}(1/k)$ in the correctly specified regime, it remains to establish
	similar rates in this instance particularly in the context of
	time-varying connectivity graphs. Such rate bounds will aid in
	developing practical implementations.
\section{Concluding Remarks}\label{sec:concl}
Traditionally, optimization algorithms have been developed under the premise of
exact information regarding functions and constraints. As systems grow in
complexity, an a priori knowledge of cost functions and efficiencies is
difficult to guarantee. One avenue lies in using observational information to
learn these functions while optimizing the overall system. We consider
precisely such a question in a networked multi-agent regime where an agent does
not have access to the decisions of the entire collective, and are furthermore locally constrained by their own feasibility sets. Generally, in such
regimes, distributed optimization can be carried out by combining a local
averaging step with a projected gradient step. We overlay a learning step where
agents update their belief regarding the misspecified parameter and examine the
associated schemes in this regime. It is shown that when agents are
characterized by merely convex, albeit misspecified, problems under general
time-varying graphs, the resulting schemes produce sequences that converge
almost surely to the set of optimal solutions and the true parameter,
respectively.  

\bibliographystyle{ieeetr}
\bibliography{bib_egp-1}

\end{document}